\newtheorem{thm}{Theorem}[section]
\newtheorem{cor}[thm]{Corollary}
\newtheorem{lem}[thm]{Lemma}
\newtheorem{prop}[thm]{Proposition}
\newtheorem{example}[thm]{Example}
\theoremstyle{definition}
\newtheorem{defn}[thm]{Definition}
\theoremstyle{remark}
\newtheorem{rem}[thm]{Remark}
\numberwithin{equation}{section}
\newcommand{\A}{\mathcal{A}}
\newcommand{\Z}{\mathbb{Z}}
\newcommand{\K}{\mathbb{K}}
\newcommand{\id}{\mbox{id}}
\begin{document}

\title{ Hom-Lie Superalgebras and Hom-Lie
admissible Superalgebras}

\author{Faouzi AMMAR}
\address{ Universit{\'{e}} de Sfax, Facult{\'{e}} des Sciences, B.P.
1171, 3000 Sfax, Tunisia} \ead{Faouzi.Ammar@rnn.fss.tn}

\author{Abdenacer MAKHLOUF \corref{cor1}}

\address{Universit\'{e} de Haute Alsace, LMIA, 4, rue des Fr\`{e}res
Lumi\`{e}re, F-68093 Mulhouse, France}
\ead{Abdenacer.Makhlouf@uha.fr}
 \cortext[cor1]{Corresponding author}



\begin{abstract}
The purpose of this paper is to study Hom-Lie superalgebras, that is
a superspace with a bracket for which the superJacobi identity is twisted by
a homomorphism. This class is a particular case of $\Gamma$-graded quasi-Lie
 algebras introduced by Larsson and Silvestrov. In this paper,
  we characterize  Hom-Lie admissible superalgebras and
  provide
 a construction theorem from which  we derive a one parameter family of Hom-Lie
 superalgebras deforming the orthosymplectic Lie superalgebra. Also,
 we prove a
 $\mathbb{Z}_2$-graded version of a Hartwig-Larsson-Silvestrov Theorem which leads us to a
  construction of
 a $q$-deformed Witt superalgebra.

\end{abstract}

\begin{keyword}
Hom-Lie superalgebra\sep Hom-Associative superalgebra\sep  Hom-Lie
admissible superalgebra\sep Lie admissible superalgebra\sep $q$-Witt
superalgebra
 \MSC[2008] 17A70\sep 17A30\sep 16A03\sep 17B68
\end{keyword}

\maketitle


\section*{Introduction}

 The motivations  to study Hom-Lie structures are related to physics and to deformations of  Lie algebras,
 in particular Lie algebras of vector fields. The paradigmatic examples are
 $q$-deformations of Witt and Virasoro
algebras constructed  in pioneering works
 (see \cite{ChaiKuLukPopPresn,ChaiIsKuLuk,ChaiPopPres,Liu,Hu}).

  Larsson and Silvestrov introduced, in \cite{LSgraded}, the class of
 $\Gamma$-graded  quasi-Lie algebras which incorporates as special case
 $\Gamma$-graded  hom-Lie algebras and $\Gamma$-graded  quasi-hom-Lie
 algebras (qhl-algebras). As a particular case, it contains
 Lie superalgebras, Lie algebras as well as Hom-Lie superalgebras and
  Hom-Lie algebras.
 The
main feature of quasi-Lie algebras, quasi-hom-Lie algebras and
Hom-Lie algebras is that the skew-symmetry and the Jacobi identity
are twisted by several deforming twisting maps and also in quasi-Lie
and quasi-hom-Lie algebras the Jacobi identity in general contains 6
twisted triple bracket terms. On the other hand these algebras
correspond to new single and multi-parameter families of algebras
obtained using twisted derivations and constituting deformations and
quasi-deformations of universal enveloping algebras of Lie and color
Lie algebras and of algebras of vector-fields.

The Hom-Lie algebras  were discussed  intensively in  \cite{HLS,LS1,LS2,LS3} while
the graded case was mentioned in \cite{LSgraded}. Hom-associative algebras
were
introduced in \cite{MS},  where it is shown that
 the commutator bracket of a Hom-associative algebra
gives rise to a Hom-Lie algebra and where a classification of Hom-Lie admissible
algebras is established.
 Given a Hom-Lie algebra, there is a universal enveloping Hom-associative
 algebra (see \cite{Yau:EnvLieAlg}).
Dualizing  Hom-associative algebras, one can define Hom-coassociative
coalgebras, Hom-bialgebras and Hom-Hopf algebras (see \cite{MS2,MS4}). It is
shown in \cite{Yau:comodule} that the universal enveloping Hom-associative
algebra carries a structure of
Hom-bialgebra. See also \cite{AM2008,Canepl2009,FregierGohr1,MS3,Yau:homology,Yau:YangBaxter,
Yau:YangBaxter2}
for other works on twisted algebraic structures.

This paper focusses on $\Z _2$-graded Hom-algebras. Mainly, we
introduce and characterize  Hom-Lie admissible superalgebras   and
prove a
 $\mathbb{Z}_2$-graded version of a Hartwig-Larsson-Silvestrov Theorem which leads us to
construct
 a $q$-deformed Witt superalgebra. Also, we
 provide
 a construction theorem from which  we derive a one parameter family of Hom-Lie
 superalgebras deforming the orthosymplectic Lie superalgebra $osp(1,2)$.

In Section 1 of this paper, we summarize the main Hom-algebra
structures and recall the framework of quasi-hom-Lie algebras. In
Section 2, we introduce Hom-associative superalgebras and Hom-Lie
superalgebras. We  provide a different way for constructing Hom-Lie
superalgebras by extending the fundamental construction of Lie
superalgebras from associative superalgebras via supercommutator
bracket. We show that the supercommutator bracket defined using the
multiplication in a Hom-associative superalgebra leads naturally to
a Hom-Lie superalgebra. We also extend a Yau's theorem
\cite{Yau:homology} on Hom-Lie algebras to
 Hom-Lie superalgebras. We show that starting with an ordinary Lie superalgebra and
a superalgebra endomorphism, we may construct a Hom-Lie superalgebra. Moreover,
 we construct a one parameter family of Hom-Lie superalgebras deforming
 the orthosymplectic superalgebra $osp(1,2).$ In Section  3,
we introduce  Hom-Lie admissible superalgebras
and more general $G$-Hom-associative superalgebras, where $G$ is a subgroup
of the permutation group $\mathcal{S}_3$.  We  show that Hom-Lie admissible
superalgebras  are $G$-Hom-associative
superalgebras. As a corollary, we obtain a classification of Lie admissible
 superalgebras.
 These results generalize the classifications obtained in the ungraded case in \cite{MS}, and in
 classical Lie case in \cite{GR04}.
  Section 4 is devoted to  $\mathbb{Z}_2$-graded version of a
  Hartwig-Larsson-Silvestrov Theorem (see \cite{HLS}, Theorem 5). A more general graded
  version of this theorem was mentioned in \cite{LSgraded}. We aim to state it
  explicitly  and
  prove it in the superalgebra case. This leads us, in the last Section, to
    construct
  $q$-deformed Witt superalgebras.
We should point out that all these results could naturally be
extended  to
 color Lie algebras.

\section{Hom-algebras and graded quasi-hom-Lie algebras}
Throughout this paper $\K$ denotes a field of
characteristic $0$.
 We summarize in the following the ungraded definitions of Hom-associative,
 Hom-Leibniz and Hom-Lie algebras (see \cite{MS}). Also, we recall
 the definition of quasi-Lie algebras (see \cite{LS2}) which border
 Hom-Lie algebras, Hom-Lie superalgebras and color Lie algebras, as
 well as quasi-hom-Lie algebras appearing naturally in the study of
 $\sigma$-derivations.
\begin{defn}
A Hom-associative algebra is a triple $(V, \mu, \alpha)$ consisting
of a linear space $V$ over $\K$, a bilinear map $\mu: V\times V
\rightarrow V$ and a  homomorphism $\alpha: V \rightarrow V$
satisfying
$$
\mu(\alpha(x),\mu (y,z))=\mu (\mu (x,y),\alpha (z))
$$
\end{defn}

A class of quasi-Leibniz algebras was introduced
in \cite{LS2} in connection to general quasi-Lie
algebras following the standard Loday's
conventions for Leibniz algebras (i.e. right
Loday algebras). In the context of the subclass
of Hom-Lie algebras one gets a class of
Hom-Leibniz algebras.

\begin{defn}
A Hom-Leibniz algebra is a triple $(V, [\cdot, \cdot], \alpha)$
consisting of a linear space $V$, bilinear map $[\cdot, \cdot]:
V\times V \rightarrow V$ and a homomorphism $\alpha: V \rightarrow
V$  satisfying
\begin{equation} \label{Leibnizalgident}
 [[x,y],\alpha(z)]=[[x,z],\alpha (y)]+[\alpha(x),[y,z]]
\end{equation}
\end{defn}

In terms of the (right) adjoint homomorphisms $Ad_Y: V\rightarrow V$
defined by $Ad_{Y}(X)=[X,Y]$, the identity \eqref{Leibnizalgident}
can be written as
\begin{equation} \label{LeibnizalgidentAdDeriv}
Ad_{\alpha(z)}([x,y]) = [Ad_{z}(x),\alpha(y)] +[\alpha(x),Ad_{z}(y)]
\end{equation}
or in pure operator form
\begin{equation} \label{LeibnizalgidentAdOper}
Ad_{\alpha(z)} \circ Ad_{y} = Ad_{\alpha(y)} \circ Ad_{z} +
Ad_{Ad_{z}(y)} \circ \alpha
\end{equation}

The Hom-Lie algebras were initially introduced by Hartwig, Larson
and Silvestrov in \cite{HLS} motivated initially by examples of
deformed Lie algebras coming from twisted discretizations of vector
fields.

\begin{defn}
A Hom-Lie algebra is a triple $(V, [\cdot, \cdot], \alpha)$
consisting of
 a linear space $V$, bilinear map $[\cdot, \cdot]: V\times V \rightarrow V$ and
 a linear space homomorphism $\alpha: V \rightarrow V$
 satisfying
\begin{eqnarray} \label{skewsymHomLie}
 [x,y]=-[y,x] \quad && {\text{(skew-symmetry)}}
\\{}
\label{JacobyHomLie}
 \circlearrowleft_{x,y,z}{[\alpha(x),[y,z]]}=0
\quad && {\text{(Hom-Jacobi identity)}}
\end{eqnarray}
for all $x, y, z$ from $V$, where
$\circlearrowleft_{x,y,z}$ denotes summation over
the cyclic permutation on $x,y,z$.
\end{defn}
Using the skew-symmetry, one may write the
Hom-Jacobi identity in the form
(\ref{LeibnizalgidentAdDeriv}). Thus, if a
Hom-Leibniz algebra is skewsymmetric, then it is
a Hom-Lie algebra.

We recall in the following the definitions of quasi-Lie and quasi-hom-Lie algebras.

\begin{defn}[\cite{LSgraded}] \label{def:quasiLiealg}
A \emph{quasi-Lie algebra} is a tuple
$(V,[\cdot,\cdot]_V,\alpha,\beta,\omega,\theta)$ where
\begin{itemize}
    \item $V$ is a linear space over $\mathbb{K}$,
    \item $[ \cdot,\cdot]_V:V\times V\to V$ is a bilinear
      map that is called a product or bracket in $V$;
    \item $\alpha,\beta:V\to V$ are linear maps,
    \item $\omega:D_\omega\to \mathfrak{L}_{\mathbb{K}}(V)$ and $\theta:D_\theta\to \mathfrak{L}_{\mathbb{K}}(V)$
      are maps with domains of definition $D_\omega, D_\theta\subseteq V\times
    V$, and where $\mathfrak{L}_{\mathbb{K}}(V)$ denotes the set of
    linear maps on $V$ over $\K$,
\end{itemize}
such that the following conditions hold:
\begin{itemize}
      \item ($\omega$-symmetry) The product satisfies a generalized skew-symmetry condition
        $$[ x,y]_V=\omega(x,y)[ y,x]_V,
        \quad\text{ for all } (x,y)\in D_\omega ;$$
\item (quasi-Jacobi identity) The bracket satisfies a generalized Jacobi identity
    $$\circlearrowleft_{x,y,z}\big\{\,\theta(z,x)\big([\alpha(x),[ y,z]_V]_V+
    \beta [ x,[ y,z]_V ]_V\big)\big\}=0,$$
     for all $(z,x),(x,y),(y,z)\in D_\theta$.
\end{itemize}
\end{defn}
Note that $(\omega(x,y)\omega(y,x)-id)[ x,y]_V=0,$ if $(x,y), (y,x)
\in D_\omega$, which follows from the computation $[
x,y]_V=\omega(x,y)[ y,x]_V =\omega(x,y)\omega(y,x)[ x,y]_V.$

The class of Quasi-Lie algebras incorporates as special cases
\emph{Hom-Lie algebras} and more general \emph{quasi-hom-Lie
algebras (qhl-algebras)} which appear naturally in the algebraic
study of $\sigma$-derivations (see \cite{HLS}) and related
deformations of infinite-dimensional and finite-dimensional Lie
algebras. To get the class of qhl-algebras one specifies
$\theta=\omega$ and restricts attention to maps $\alpha$ and $\beta$
satisfying the twisting condition $[
\alpha(x),\alpha(y)]_V=\beta\circ\alpha [ x,y]_V$. Specifying this
further by taking $D_\omega =V \times V$, $\beta=id$ and
$\omega=-id$, one gets the class of Hom-Lie algebras including Lie
algebras when $\alpha = id$. The class of Hom-Lie superalgebras is
obtained by specifying $\beta=0$, $\omega(x,y)=(-1)^{|x||y|}$ and
 $\theta(x,y)=(-1)^{|x||y|}$, where $x,y$ are elements of $\mathbb{Z}_2$-graded
 space $V$ and $|x|,|y|$ their parities.

\section{Hom-associative Superalgebras and Hom-Lie Superalgebras}
In this Section, we focuss on Hom-associative and Hom-Lie
superalgebras and  provide  a different way for constructing Hom-Lie
superalgebras by extending the fundamental construction of Lie
superalgebras from associative superalgebras via supercommutator
bracket.

Now, let $V$ be a linear superspace over $\K$ that is a
$\mathbb{Z}_2$-graded linear space with a direct sum $V=V_0 \oplus
V_1$. The element of $V_j$, $j=\{0,1\}$, are said to be homogenous
and of parity $j$. The parity of a homogeneous element $x$ is
denoted by $| x |$.

\begin{defn}
A Hom-associative superalgebra is a triple $(V, \mu, \alpha)$
consisting of a superspace $V$, an even bilinear map $\mu: V\times V
\rightarrow V$ and an even  homomorphism $\alpha: V \rightarrow V$
satisfying
$$
\mu(\alpha(x),\mu (y,z))=\mu (\mu (x,y),\alpha (z))
$$
\end{defn}

\begin{defn}
A Hom-Lie superalgebra is a triple $(V, [\cdot, \cdot], \alpha)$
consisting of
 a superspace $V$, an even bilinear map $[\cdot, \cdot]: V\times V \rightarrow V$ and
 an even superspace homomorphism $\alpha: V \rightarrow V$
 satisfying
\begin{eqnarray} \label{skewsymHomLie}
 &[x,y]=-(-1)^{\mid x\mid\mid y\mid}[y,x]\\
\label{JacobyHomsuperLie}
 &(-1)^{\mid x\mid\mid z\mid}[\alpha(x),[y,z]]+(-1)^{\mid z\mid\mid y\mid}
 [\alpha(z),[x,y]]+(-1)^{\mid y \mid\mid x\mid}[\alpha(y),[z,x]]=0
\end{eqnarray}
for all homogeneous element $x, y, z$ in $V$.
\end{defn}

Let $\left( V, [\cdot, \cdot], ,\alpha \right) $ and $\left(
V^{\prime }, [\cdot, \cdot] ^{\prime },\alpha^{\prime }\right) $ be
two Hom-Lie superalgebras. An even homomorphism  $f\ :V\rightarrow
V^{\prime }$ is said to be a \emph{morphism of Hom-Lie
superalgebras} if
\begin{eqnarray} [f(x), f(y)] ^{\prime }&=&f ([x, y])\quad \forall x,y\in V
\\ f\circ \alpha&=&\alpha^{\prime
}\circ f
\end{eqnarray}
Morphisms of Hom-associative superalgebras are defined similarly.
\begin{rem}
We recover the classical Lie superalgebra and associative
superalgebra when $\alpha =\id$. The Hom-Lie algebra and
Hom-associative algebras are obtained when the part of parity one is
trivial.
\end{rem}
\begin{example}[2-dimensional abelian Hom-Lie  superalgebra]

Every  bilinear map $\mu$ on a $2$- dimensional linear superspace
$V=V_0 \oplus V_1$, where $V_0$ is generated by $x$ and $V_1$ is
generated by $y$ and  such that $[x,y]=0$ defines a Hom-Lie
superalgebra for any homomorphism $\alpha$ of superalgebra. Indeed,
the graded Hom-Jacobi identity is satisfied for any triple
$(x,x,y)$.
\end{example}

\begin{example}[Affine Hom-Lie  superalgebra ]
Let $V=V_0 \oplus V_1$ be a $3$-dimensional superspace where $V_0$
is generated by $e_1 , e_2$ and $V_1$ is generated by $e_3$. The
triple
 $(V,[\cdot,\cdot],\alpha)$ is a Hom-Lie superalgebra defined by $[e_1
 ,e_2]=e_1,[e_1,e_3]=[e_2,e_3]=[e_3,e_3]=0$ and $\alpha$ is any
 homomorphism.
\end{example}
In the following, we show that the supercommutator bracket defined
using the multiplication in a Hom-associative algebra leads
naturally to Hom-Lie superalgebra.
\begin{prop}\label{Supercommutator}
Given any Hom-associative superalgebra $(V,\mu,\alpha )$ one can define the supercommutator on homogeneous elements by
   $$ [x,y] = \mu (x,y) - ( - 1)^{ | x | | y |} \mu (y,x)$$
and then extending by linearity to all elements. Then  $(V,[\cdot,\cdot],\alpha )$  is a Hom-Lie superalgebra.
\end{prop}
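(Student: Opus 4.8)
The plan is to verify the two defining axioms of a Hom-Lie superalgebra directly from the Hom-associativity axiom. The skew-supersymmetry is essentially immediate from the definition of the bracket, so the real content lies in checking the graded Hom-Jacobi identity. My strategy is to substitute the definition $[x,y] = \mu(x,y) - (-1)^{|x||y|}\mu(y,x)$ into the left-hand side of \eqref{JacobyHomsuperLie}, expand everything into a sum of terms of the form (sign) $\cdot \mu(\alpha(\cdot),\mu(\cdot,\cdot))$ or (sign) $\cdot \mu(\mu(\cdot,\cdot),\alpha(\cdot))$, and then use the Hom-associativity relation to rewrite each term into a common normal form, at which point the signed terms should cancel in pairs.

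First I would dispense with skew-supersymmetry: since $[y,x] = \mu(y,x) - (-1)^{|x||y|}\mu(x,y)$, multiplying by $-(-1)^{|x||y|}$ and using $(-1)^{2|x||y|}=1$ recovers $[x,y]$ exactly, so the identity \eqref{skewsymHomLie} holds on homogeneous elements and extends bilinearly. I would also note that $\alpha$ being an even homomorphism of the Hom-associative structure is precisely what is needed for it to serve as the twisting map of the resulting Hom-Lie superalgebra.

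For the Hom-Jacobi identity, I would first expand the inner bracket $[y,z] = \mu(y,z) - (-1)^{|y||z|}\mu(z,y)$, then the outer bracket $[\alpha(x),[y,z]]$, using that $|\alpha(x)|=|x|$ since $\alpha$ is even and that $|[y,z]| = |y|+|z|$. This produces four signed terms per summand of \eqref{JacobyHomsuperLie}, hence twelve terms in all. The key step is that Hom-associativity lets me convert each term of the form $\mu(\alpha(u),\mu(v,w))$ into $\mu(\mu(u,v),\alpha(w))$, so I can bring all twelve terms to, say, the ``left-normalized'' form $\pm\mu(\mu(\cdot,\cdot),\alpha(\cdot))$. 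I expect the twelve resulting terms to organize into six cancelling pairs, with the sign bookkeeping engineered exactly so that the $(-1)^{|x||z|}, (-1)^{|z||y|}, (-1)^{|y||x|}$ prefactors in the cyclic sum match.

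The main obstacle, and the only genuinely delicate part, will be the sign bookkeeping: tracking the Koszul signs that arise both from swapping the order of arguments inside each bracket and from the cyclic prefactors, and verifying that each term produced from one cyclic summand cancels against a term produced from a different cyclic summand. The cleanest way to control this is to record each of the twelve terms together with its total sign as a product of the form $(-1)^{(\text{prefactor exponent}) + (\text{swap exponent})}$, reduce each exponent modulo $2$ using identities such as $|x|(|y|+|z|) \equiv |x||y|+|x||z|$, and then match terms that carry the same underlying monomial $\mu(\mu(a,b),\alpha(c))$ but opposite signs. Once this matching is exhibited, the sum is manifestly zero and the proof is complete.
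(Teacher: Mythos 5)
Your proposal is correct and follows essentially the same route as the paper: expand the cyclic sum of $[\alpha(x),[y,z]]$ into twelve signed terms and cancel them in six pairs via the Hom-associativity relation $\mu(\alpha(a),\mu(b,c))=\mu(\mu(a,b),\alpha(c))$. The only cosmetic difference is that you propose normalizing every term to the left-nested form before pairing, while the paper cancels each pair directly with one member in each of the two nesting forms; the sign bookkeeping works out exactly as you anticipate.
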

\begin{proof}
The bracket is obviously supersymmetric and with a direct
computation we have
$$
\begin{array}{c}
  (-1)^{|x||z|}[\alpha (x),[y,z]]+
  (-1)^{|z||y|}[\alpha (z),[x,y]]+
  (-1)^{|y||x|}[\alpha(y),[z,x]]= \\
  (-1)^{|x||z|}\mu(\alpha (x),\mu(y,z))-(-1)^{|x||z|+|y||z|}\mu(\alpha(x),\mu(z,y))
  \\-(-1)^{|x||y|}\mu(\mu(y,z),\alpha(x))+(-1)^{|x||y|+|y||z|}
\mu(\mu(z,y),\alpha(x))\\
+(-1)^{|y||x|}\mu(\alpha
(y),\mu(z,x))-(-1)^{|x||y|+|z||x|}\mu(\alpha(y),\mu(x,z))
 \\ -(-1)^{|y||z|}\mu(\mu(z,x),\alpha(y))+(-1)^{|y||z|+|z||x|}
\mu(\mu(x,z),\alpha(y))\\
+(-1)^{|z||y|}\mu(\alpha(z),\mu(x,y))-(-1)^{|y||z|+|x||y|}\mu(\alpha(z),\mu(y,x))
  \\-(-1)^{|z||x|}\mu(\mu(x,y),\alpha(z))+(-1)^{|z||x|+|x||y|}
\mu(\mu(y,x),\alpha(z)) =0
\end{array}
$$
\end{proof}

We extend in the following the Yau's theorem (see
\cite{Yau:homology}) into  the graded case. The following theorem
gives a way to construct Hom-Lie superalgebras, starting from a Lie
superalgebra and an even  superalgebra endomorphism.

\begin{thm} \label{thm:SALmorphism}
Let $(V,[\cdot,\cdot])$ be a Lie superalgebra  and  $\alpha : V\rightarrow V$ be
an even  Lie superalgebra endomorphism.
Then
$(V,[\cdot,\cdot]_\alpha,\alpha)$, where $[x,y]_\alpha=\alpha([x,y])$,  is a Hom-Lie
superalgebra.

Moreover, suppose that  $(V',[\cdot,\cdot]')$ is another Lie superalgebra and  $\alpha ' : V'\rightarrow V'$ is a Lie superalgebra endomorphism. If $f:V\rightarrow V'$ is a Lie superalgebra morphism that satisfies $f\circ\alpha=\alpha'\circ f$ then
$$f:(V,[\cdot,\cdot]_\alpha,\alpha)\longrightarrow (V',[\cdot,\cdot]',\alpha ')
$$
is a morphism of Hom-Lie superalgebras.
\end{thm}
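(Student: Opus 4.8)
The plan is to verify the two Hom-Lie superalgebra axioms for $(V,[\cdot,\cdot]_\alpha,\alpha)$ directly, and then separately check the morphism compatibility. For the skew-symmetry \eqref{skewsymHomLie}, I would start from $[x,y]_\alpha=\alpha([x,y])$, apply the skew-symmetry of the underlying Lie superalgebra $[x,y]=-(-1)^{|x||y|}[y,x]$, and then use linearity of $\alpha$ to pull the scalar $-(-1)^{|x||y|}$ outside, obtaining $[x,y]_\alpha=-(-1)^{|x||y|}[y,x]_\alpha$. Since $\alpha$ is even, it preserves parity, so the grading signs are unaffected; this step is essentially immediate.

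The substantive step is the Hom-Jacobi identity \eqref{JacobyHomsuperLie}. First I would expand one cyclic term: since $\alpha$ is an endomorphism of the bracket, we have $[\alpha(x),[y,z]_\alpha]_\alpha=\alpha\big([\alpha(x),\alpha([y,z])]\big)=\alpha^2\big([x,[y,z]]\big)$, using that $\alpha([y,z])=[\alpha(y),\alpha(z)]$ would not be needed here but rather that $\alpha$ commutes through the bracket: concretely $[\alpha(x),\alpha([y,z])]=\alpha([x,[y,z]])$ applied once more under the outer $\alpha$. The key observation is that each of the three cyclic terms collapses to $\alpha^2$ applied to the corresponding term of the ordinary super-Jacobi identity for $(V,[\cdot,\cdot])$. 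Summing the three terms with their signs $(-1)^{|x||z|}$, $(-1)^{|z||y|}$, $(-1)^{|y||x|}$ and factoring out $\alpha^2$ by linearity, the bracketed expression is exactly the super-Jacobi identity of the original Lie superalgebra, which vanishes. Hence $\alpha^2$ applied to zero is zero, and the Hom-Jacobi identity holds.

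For the morphism claim, I would check the two defining conditions of a Hom-Lie superalgebra morphism. The relation $f\circ\alpha=\alpha'\circ f$ is given as a hypothesis, so nothing is needed there. For bracket-preservation, I compute $f([x,y]_\alpha)=f(\alpha([x,y]))=\alpha'(f([x,y]))$ using the hypothesis, then $=\alpha'([f(x),f(y)]')$ since $f$ is a Lie superalgebra morphism, and finally $=[f(x),f(y)]'_{\alpha'}$ by the definition of the twisted bracket on $V'$. This chain closes the argument.

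I do not expect a genuine obstacle; the only point requiring care is tracking that $\alpha$ being an \emph{endomorphism} lets it pass through the bracket exactly twice in each Hom-Jacobi term (once for the inner bracket already encoded in $[\cdot,\cdot]_\alpha$, once for the outer), yielding the clean $\alpha^2$ factor. One must also confirm the sign coefficients are untouched by $\alpha$, which holds because $\alpha$ is even and so preserves all parities $|x|,|y|,|z|$; this guarantees the signs appearing in \eqref{JacobyHomsuperLie} match those in the original super-Jacobi identity verbatim.
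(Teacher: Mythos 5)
Your proof is correct and follows essentially the same route as the paper: each cyclic term collapses to $\alpha^2$ applied to the corresponding term of the ordinary super-Jacobi identity (using the definition of $[\cdot,\cdot]_\alpha$ for the outer bracket and the endomorphism property of $\alpha$ applied to the pair $(x,[y,z])$), and the morphism claim is verified by the same chain $f\circ\alpha\circ[\cdot,\cdot]=\alpha'\circ f\circ[\cdot,\cdot]=\alpha'\circ[\cdot,\cdot]'\circ f$. Your additional explicit check of skew-symmetry and of the sign bookkeeping under the even map $\alpha$ is fine; the paper simply leaves those points implicit.
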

\begin{proof}
We show that $(V,[\cdot,\cdot]_\alpha,\alpha)$ satisfies the Hom-superJacobi
  identity \ref{JacobyHomsuperLie}. Indeed
\begin{align*}
\circlearrowleft_{x,y,z}{(-1)^{\mid x \mid\mid z\mid}[\alpha(x),[y,z]_\alpha]_
\alpha}&=\circlearrowleft_{x,y,z}{(-1)^{\mid x\mid\mid z\mid}\alpha ([\alpha(x),\alpha([y,z])])}\\
&=\alpha^2(\circlearrowleft_{x,y,z}{(-1)^{\mid x\mid\mid z\mid} [x,[y,z]])}\\
&=\alpha^2(\circlearrowleft_{x,y,z}{(-1)^{\mid x\mid\mid z\mid} [x,[y,z]])}\\
&=0
\end{align*}

The second assertion follows from
$$ f\circ [\cdot,\cdot]_\alpha = f\circ \alpha \circ [\cdot,\cdot]
= \alpha ' \circ f \circ [\cdot,\cdot] = \alpha ' \circ [\cdot,\cdot]' \circ f
= [\cdot,\cdot]'_{\alpha '} \circ f. $$
\end{proof}

\begin{example}We construct an example of Hom-Lie superalgebra, which is not
a Lie superalgebra starting from the orthosymplectic Lie superalgebra.
We consider in the sequel the matrix realization of this Lie superalgebra.

Let $osp(1,2)=V_0 \oplus V_1$  \ be  the Lie superalgebra where
$V_0$ is generated by:
$$ H=\left(
  \begin{array}{ccc}
  1 & 0& 0 \\
  0 &0 & 0 \\
    0 & 0 & -1\\
  \end{array}\right), \ \ X=\left(
  \begin{array}{ccc}
  0 & 0 & 1 \\
  0 & 0 & 0 \\
  0 & 0 & 0\\
  \end{array}
\right),\ \ Y=\left(
  \begin{array}{ccc}
  0 & 0& 0 \\
  0 & 0 & 0 \\
  1 & 0 & 0\\
  \end{array}
\right), $$ and $V_1$ is generated by:

$$ F=\left(
  \begin{array}{ccc}
  0 & 0 & 0 \\
  1 & 0 & 0 \\
  0 & 1 & 0\\
  \end{array}
\right) , \ \ G=\left(
  \begin{array}{ccc}
  0 & 1& 0 \\
  0 & 0 & -1 \\
  0 & 0 & 0\\
  \end{array}
\right) .$$

The defining relations (we give only the ones with non zero values
in the right hand side) are
$$[H,X]=2X, \ [H,Y]=-2Y,\ [X,Y]=H,$$
$$[Y,G]=F,\ [X,F]=G,\ [H,F]=-F, \ [H,G]=G,$$
$$\ [G,F]=H, \ [G,G]=-2X,\ [F,F]=2Y.
$$
 Let $\lambda \in \mathbb{R}^*$, we consider the linear map
$\alpha_{\lambda}: osp(1,2)\rightarrow osp(1,2)$ defined by:
$$
\alpha_{\lambda}(X)=\lambda^2 X, \ \
\alpha_{\lambda}(Y)=\frac{1}{\lambda^2}Y, \ \
\alpha_{\lambda}(H)=H,\ \alpha_{\lambda}(F)=\frac{1}{\lambda}F, \ \
\alpha_{\lambda}(G)=\lambda G .$$

We provide a family of Hom-Lie superalgebras
$osp(1,2)_\lambda=(osp(1,2),[\cdot,\cdot]_{\alpha_\lambda} ,
\alpha_\lambda)$ where the Hom-Lie superalgebra bracket
$[\cdot,\cdot]_{\alpha_\lambda}$ on the basis elements is given, for
$\lambda\neq 0$, by:
$$[H,X]_{\alpha_\lambda}=2\lambda^2 X,\ \
[H,Y]_{\alpha_\lambda}=-\frac{2}{\lambda^2}Y, \ \
[X,Y]_{\alpha_\lambda}=H,$$ $$
[Y,G]_{\alpha_\lambda}=\frac{1}{\lambda}F,\ \
[X,F]_{\alpha_\lambda}=\lambda G,\ \
[H,F]_{\alpha_\lambda}=-\frac{1}{\lambda}F,\ \
[H,G]_{\alpha_\lambda}=\lambda G,$$ $$ [G,F]_{\alpha_\lambda}=H,\ \
[G,G]_{\alpha_\lambda}=-2\lambda^2X,\ \
[F,F]_{\alpha_\lambda}= \frac{2}{\lambda^2}Y. $$

These Hom-Lie superalgebras are not Lie superalgebras for $\lambda\neq 1$.

Indeed, the left hand side of the superJacobi identity
(\ref{JacobyHomsuperLie}), for $\alpha =\id$,  leads to
$$[X,[Y,H]-[H,[X,Y]]+[Y,[H,X]]=\frac{2(1-\lambda ^4)}{\lambda^{2}}Y,
$$
and also
$$[H,[F,F]-[F,[H,F]]+[F,[F,H]]=\frac{4(\lambda -1)}{\lambda^{4}}Y.
$$
Then, they do not vanish for $\lambda\neq 1.$

\end{example}

\section{Hom-Lie-Admissible Superalgebras}
We introduce and discuss in this Section the Hom-Lie admissible
superalgebras. The Lie admissible algebras were introduced by A. A.
Albert in 1948.  We extend to graded algebras the concept of
Hom-Lie-Admissible algebras studied in \cite{MS}. This study borders
also an extension to graded case of the Lie-admissible algebras
discussed in \cite{GR04}.

Let $\A =(V, \mu, \alpha)$ be a Hom-superalgebra, that is a
superspace $V$ with an even bilinear map $\mu$ and an even  linear
map $\alpha$ satisfying eventually identities. Let $[x,y]=
\mu(x,y)-(-1)^{|x||y|}\mu(y,x) $, for all homogeneous element $x,y
\in V$, be the associated supercommutator. We call
$\alpha$-associator of a multiplication
  $\mu$
 a trilinear map
$\mathfrak{as}_{\alpha}$ on $V$  defined for  $x_1,x_2,x_3 \in V$  by
$$
\mathfrak{as}_{\alpha}(x_1,x_2,x_3)=\mu(\alpha(x_{1}),\mu (x_{2},x_3))-\mu (\mu (x_1,x_2),\alpha
(x_{3})).
$$
\begin{defn} \label{def:Lieadmis}
Let $\A=(V, \mu, \alpha)$ be a Hom-superalgebra  on $V$ defined by
an even multiplication $\mu$ and an even homomorphism $\alpha$.
Then $\A$ is said to be Hom-Lie admissible superalgebra over $V$ if
the bracket defined for all homogeneous element $x,y \in V$ by
\begin{equation} \label{commutator}
[ x,y ]=\mu (x,y)-(-1)^{|x||y|}\mu (y,x )
\end{equation}
 satisfies the
Hom-superJacobi identity (\ref{JacobyHomsuperLie}).

\end{defn}
\begin{rem}
Since the supercommutator bracket \eqref{commutator}
is always supersymmetric, this makes any Hom-Lie
admissible superalgebra into a Hom-Lie superalgebra.
\end{rem}
\begin{rem}
 As mentioned  in in the proposition (\ref{Supercommutator}), any
 associative superalgebra is a Hom-Lie admissible superalgebra.
\end{rem}
\begin{lem}\label{JacobiSuperCommutator}
Let $\A=(V, \mu, \alpha)$ be a Hom-superalgebra and
$[ -,- ]$ be the associated supercommutator then
\begin{eqnarray}\label{LieAdmiIdentity}
&&\circlearrowleft_{x,y,z}{(-1)^{|x||z|}[\alpha (x),[y,z]]}=\\
&&\quad (-1)^{|x||z|} \mathfrak{as}_{\alpha}(x,y,z) +
(-1)^{|y||x|}\mathfrak{as}_{\alpha}(y,z,x)+(-1)^{|z||y|}
\mathfrak{as}_{\alpha}(z,x,y)\nonumber\\
&&\quad
-(-1)^{|x||z|+|y||z|}\mathfrak{as}_{\alpha}(x,z,y)-(-1)^{|x||y|+|y||z|}
\mathfrak{as}_{\alpha}(z,y,x)-(-1)^{|x||y|+|x||z|}
\mathfrak{as}_{\alpha}(y,x,z)\nonumber
\end{eqnarray}
\end{lem}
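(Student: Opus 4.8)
The plan is to prove the identity by a direct expansion of both sides into elementary products built from $\mu$ and $\alpha$, followed by a term-by-term comparison. Since the supercommutator is a difference of two products, each nested bracket $[\alpha(\cdot),[\cdot,\cdot]]$ unfolds into four summands, so the cyclic sum on the left-hand side of \eqref{LieAdmiIdentity} produces twelve terms, each of the shape $\mu(\alpha(a),\mu(b,c))$ or $\mu(\mu(a,b),\alpha(c))$; on the right-hand side each of the six $\alpha$-associators contributes exactly two such terms, again twelve in total. The entire content of the lemma is that these two collections of twelve signed terms coincide.

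First I would expand the left-hand side. Writing $[y,z]=\mu(y,z)-(-1)^{|y||z|}\mu(z,y)$ and using that $\mu$ is even, so that $[y,z]$ is homogeneous of parity $|y|+|z|$, I would substitute into $[\alpha(x),[y,z]]=\mu(\alpha(x),[y,z])-(-1)^{|x|(|y|+|z|)}\mu([y,z],\alpha(x))$ and simplify the exponents modulo $2$ (repeatedly using that $2|a||b|$ is even). Carrying out the same step for the two cyclically shifted terms $(-1)^{|y||x|}[\alpha(y),[z,x]]$ and $(-1)^{|z||y|}[\alpha(z),[x,y]]$, I would obtain precisely the twelve-term expansion already recorded in the proof of Proposition \ref{Supercommutator}.

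Next I would expand the right-hand side, splitting each associator as $\mathfrak{as}_{\alpha}(a,b,c)=\mu(\alpha(a),\mu(b,c))-\mu(\mu(a,b),\alpha(c))$. The matching is then organized by the six orderings of $\{x,y,z\}$: the \emph{left-leg} halves $\mu(\alpha(a),\mu(b,c))$ are matched against the six left-hand-side terms of the form $\mu(\alpha(\cdot),\mu(\cdot,\cdot))$, and the \emph{right-leg} halves $\mu(\mu(a,b),\alpha(c))$ against the six terms of the form $\mu(\mu(\cdot,\cdot),\alpha(\cdot))$. For each ordering one checks that the prefactor carried by the associator agrees, after reduction mod $2$, with the sign attached to the corresponding left-hand-side term; for instance the summand $-(-1)^{|x||z|}\mu(\mu(x,y),\alpha(z))$ coming from $(-1)^{|x||z|}\mathfrak{as}_{\alpha}(x,y,z)$ is exactly the one produced by the right leg of $(-1)^{|z||y|}[\alpha(z),[x,y]]$, since $|z||y|+|z|(|x|+|y|)\equiv |x||z|\pmod 2$.

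The computation itself is routine; the only genuine difficulty, and the place where errors creep in, is the sign bookkeeping. I would control this by fixing once and for all the convention that every parity exponent is reduced modulo $2$, and by pairing each of the twelve left-hand-side terms with its unique partner on the right \emph{before} comparing coefficients, so that the verification collapses to six elementary congruences among products of parities. Once all twelve coefficients are seen to agree, the identity \eqref{LieAdmiIdentity} follows immediately, and the special case in which $\mu$ is Hom-associative (all associators vanishing) recovers the Hom-superJacobi identity \eqref{JacobyHomsuperLie}.
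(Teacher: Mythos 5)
Your proposal is correct and takes essentially the same route as the paper's proof: a direct expansion of the cyclic sum of supercommutators into twelve signed terms of the form $\mu(\alpha(\cdot),\mu(\cdot,\cdot))$ and $\mu(\mu(\cdot,\cdot),\alpha(\cdot))$, regrouped into the six signed $\alpha$-associators, with the sign bookkeeping reduced modulo $2$ exactly as you describe (your sample congruence $|z||y|+|z|(|x|+|y|)\equiv|x||z|\pmod 2$ is right). Nothing further is needed.
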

\begin{proof}
By straightforward calculation, we have
\begin{eqnarray*}
&&\circlearrowleft_{x,y,z}(-1)^{|x||z|}[\alpha (x),[y,z]]
=\circlearrowleft_{x,y,z}(-1)^{|x||z|}[\alpha (x),\mu (y,z)-(-1)^{|y||z|}\mu (z,y)]
\\
&& =(-1)^{|x||z|}\mu (\alpha (x),\mu (y,z))-(-1)^{|x||z|}\mu (\mu (y,z),\alpha (x))\\
&&\quad +(-1)^{|y||x|}\mu (\alpha (y),\mu (z,x))-(-1)^{|y||z|}\mu (\mu (z,x),\alpha (y))\\
&&\quad +(-1)^{|z||y|}\mu (\alpha (z),\mu (x,y))-(-1)^{|z||y|}\mu (\mu (x,y),\alpha (z))\\
&&\quad -(-1)^{|x||z|+|y||z|}\mu (\alpha (x),\mu (z,y))+
(-1)^{|x||z|+|y||z|}\mu (\mu (z,y),\alpha (x))\\
&&\quad -(-1)^{|y||x|+|z||x|}\mu (\alpha (y),\mu (x,z))+
(-1)^{|z||x|+|y||z|}\mu (\mu (x,z),\alpha (y))\\
&&\quad -(-1)^{|z||y|+|x||y|}\mu (\alpha (z),\mu (y,x))+
(-1)^{|x||y|+|z||x|}\mu (\mu (y,x),\alpha (z))
\\
&&=
(-1)^{|x||z|}
\mathfrak{as}_{\alpha}(x,y,z)+
(-1)^{|y||x|}\mathfrak{as}_{\alpha}(y,z,x)+(-1)^{|z||y|}
\mathfrak{as}_{\alpha}(z,x,y)\\
&&\quad -(-1)^{|x||z|+|y||z|}\mathfrak{as}_{\alpha}(x,z,y)-(-1)^{|x||y|+|y||z|}
\mathfrak{as}_{\alpha}(z,y,x)-(-1)^{|x||y|+|x||z|}
\mathfrak{as}_{\alpha}(y,x,z)
\end{eqnarray*}
\end{proof}
\begin{rem}
If $\alpha =\id$, then we obtain a formula expressing the left hand side of the
classical superJacobi identity in terms of classical associator.
\end{rem}
 In the following we aim to characterize the Hom-Lie admissible superalgebras
 in terms of $\alpha$-associator.
 We introduce the notation
$$S(x,y,z):= (-1)^{|x||z|}\mathfrak{as}_{\alpha}(x,y,z)+
(-1)^{|y||x|}\mathfrak{as}_{\alpha}(y,z,x)+(-1)^{|z||y|}
\mathfrak{as}_{\alpha}(z,x,y).
$$
Then, we have the following properties.

\begin{prop}
Let  $\A =(V, \mu, \alpha)$ be a Hom-superalgebra, then $\A$ is a
Hom-Lie admissible superalgebra if and
only if it satisfies
\begin{equation}\label{Srel} S(x,y,z)=(-1)^{|x||y|+|x||z|+|y||z|}S(x,z,y)
\end{equation}
for any homogenous elements  $x,y,z \in V.$
\end{prop}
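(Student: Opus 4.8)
The plan is to read the characterization straight off Lemma~\ref{JacobiSuperCommutator}, so that no fresh computation with $\mu$ is needed. Denote by $T(x,y,z)$ the second group of three terms on the right-hand side of \eqref{LieAdmiIdentity}, namely
$$T(x,y,z)=(-1)^{|x||z|+|y||z|}\mathfrak{as}_{\alpha}(x,z,y)+(-1)^{|x||y|+|y||z|}\mathfrak{as}_{\alpha}(z,y,x)+(-1)^{|x||y|+|x||z|}\mathfrak{as}_{\alpha}(y,x,z),$$
so that Lemma~\ref{JacobiSuperCommutator} reads $\circlearrowleft_{x,y,z}(-1)^{|x||z|}[\alpha(x),[y,z]]=S(x,y,z)-T(x,y,z)$. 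By Definition~\ref{def:Lieadmis}, $\A$ is Hom-Lie admissible exactly when this cyclic sum vanishes for all homogeneous $x,y,z$, i.e. precisely when $S(x,y,z)=T(x,y,z)$. Hence the entire proposition reduces to identifying $T(x,y,z)$ with $(-1)^{|x||y|+|x||z|+|y||z|}S(x,z,y)$, after which \eqref{Srel} is just a rewriting of $S=T$.

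The one real step is therefore a sign computation. First I would write out $S(x,z,y)$ by substituting the ordered triple $(x,z,y)$ into the defining formula for $S$, obtaining
$$S(x,z,y)=(-1)^{|x||y|}\mathfrak{as}_{\alpha}(x,z,y)+(-1)^{|x||z|}\mathfrak{as}_{\alpha}(z,y,x)+(-1)^{|y||z|}\mathfrak{as}_{\alpha}(y,x,z).$$
Multiplying term by term by $(-1)^{|x||y|+|x||z|+|y||z|}$ and reducing each exponent modulo $2$, the three prefactors $(-1)^{|x||y|}$, $(-1)^{|x||z|}$, $(-1)^{|y||z|}$ become $(-1)^{|x||z|+|y||z|}$, $(-1)^{|x||y|+|y||z|}$, $(-1)^{|x||y|+|x||z|}$ respectively, since in each case the doubled parity product cancels. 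These are exactly the three coefficients occurring in $T(x,y,z)$ in front of the same three associators $\mathfrak{as}_{\alpha}(x,z,y)$, $\mathfrak{as}_{\alpha}(z,y,x)$, $\mathfrak{as}_{\alpha}(y,x,z)$, so $(-1)^{|x||y|+|x||z|+|y||z|}S(x,z,y)=T(x,y,z)$.

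Combining the two facts, $S(x,y,z)=T(x,y,z)$ is equivalent to \eqref{Srel}, which settles both implications simultaneously. The only work is the exponent bookkeeping in the middle step, and the point to watch is that the parities $|x|,|y|,|z|$ lie in $\mathbb{Z}_2$, so all the arithmetic in the exponents is done modulo $2$ and the doubled terms drop out; there is no structural obstacle beyond this. I would also remark that the argument uses no identity for $\mu$ or $\alpha$ whatsoever, so it applies to an arbitrary Hom-superalgebra, exactly as the statement asserts.
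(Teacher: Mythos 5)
Your proof is correct and is essentially the paper's own argument: the paper likewise expands $S(x,y,z)-(-1)^{|x||y|+|x||z|+|y||z|}S(x,z,y)$, checks the same sign reductions modulo $2$, and identifies the result with the cyclic sum from Lemma~\ref{JacobiSuperCommutator}. Naming the second block of terms $T(x,y,z)$ is only a cosmetic repackaging of the identical computation.
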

\begin{proof} We have
\begin{eqnarray*}
&& S(x,y,z)-(-1)^{|x||y|+|x||z|+|y||z|}S(x,z,y)
\\&&=(-1)^{|x||z|}
\mathfrak{as}_{\alpha}(x,y,z)+
(-1)^{|y||x|}\mathfrak{as}_{\alpha}(y,z,x)+(-1)^{|z||y|}
\mathfrak{as}_{\alpha}(z,x,y)\\
&& \quad -(-1)^{|x||y|+|x||z|+|y||z|}((-1)^{|x||y|}\mathfrak{as}_{\alpha}(x,z,y)+
(-1)^{|z||x|}\mathfrak{as}_{\alpha}(z,y,x)+(-1)^{|y||z|}
\mathfrak{as}_{\alpha}(y,x,z))
\\
&&=(-1)^{|x||z|}
\mathfrak{as}_{\alpha}(x,y,z)+
(-1)^{|y||x|}\mathfrak{as}_{\alpha}(y,z,x)+(-1)^{|z||y|}
\mathfrak{as}_{\alpha}(z,x,y)\\
&&\quad -(-1)^{|x||z|+|y||z|}\mathfrak{as}_{\alpha}(x,z,y)-(-1)^{|x||y|+|y||z|}
\mathfrak{as}_{\alpha}(z,y,x)-(-1)^{|x||y|+|x||z|}
\mathfrak{as}_{\alpha}(y,x,z)\\
 &&= \circlearrowleft_{x,y,z}{(-1)^{|x||z|}[x,[y,z]]} \quad
 \text{(lemma \ref{JacobiSuperCommutator})}
\end{eqnarray*}
Then the Hom-superJacobi identity (\ref{JacobyHomsuperLie}) is satisfied if and only if the condition
(\ref{Srel}) holds.
\end{proof}

 In the following, we provide a classification of Hom-Lie admissible superalgebras
 using the symmetric group $\mathcal{S}_3$. We extend to graded case the
  notion of $G$-Hom-associative algebras
 which was introduced in the classical ungraded Lie case in (\cite{GR04}) and
  developed for the Hom-Lie case in (\cite{MS}).

Let $\mathcal{S}_3$ be the permutation group generated by $\sigma_1
,\sigma_2.$ We extend a permutation $\tau \in \mathcal{S}_3$ to a
map $\tau : V^{\times 3} \rightarrow V^{\times 3}$ defined for
$x_1,x_2,x_3\in V$ by
$\tau(x_1,x_2,x_3)=(x_{\tau(1)},x_{\tau(2)},x_{\tau(3)}).$ We keep
for simplicity
 the same notation. In particular,
 $\sigma_1(x_1,x_2,x_3)=(x_{2},x_{1},x_{3})$ and
 $\sigma_2(x_1,x_2,x_3)=(x_{1},x_{3},x_{2}).$

 We introduce a notion of a parity of transposition $\sigma_i$ where $i\in \{1,2\}$, by setting
   $$|\sigma_i(x_1,x_2,x_3)|=|x_i||x_{i+1}|.$$
 We assume that the parity of the identity is $0$ and for the composition
 $\sigma_i\sigma_j$, it is defined by
\begin{eqnarray*}|\sigma_i\sigma_j(x_1,x_2,x_3)|&=&
 |\sigma_j(x_1,x_2,x_3)|+|\sigma_i(\sigma_j(x_1,x_2,x_3))|\\
 \ &=& |\sigma_j(x_1,x_2,x_3)|+
 |\sigma_i(x_{\sigma _j(1)},x_{\sigma_ j(2)},x_{\sigma_j(3)})|
 \end{eqnarray*}
We define by induction the parity for any composition. For the elements
$\id,\sigma_1 ,\sigma_2,\sigma_1 \sigma_2,\sigma_2 \sigma_1,
\sigma_2 \sigma_1 \sigma_2$ of
$\mathcal{S}_3$, we obtain
\begin{eqnarray*}
\mid \id (x_1,x_2,x_3)\mid&=&0,\\
\mid\sigma_1(x_1,x_2,x_3)\mid &=&|x_1||x_{2}|
 ,\\
 \mid\sigma_2(x_1,x_2,x_3)\mid &=&|x_2||x_{3}|,\\
 \mid \sigma_1 \sigma_2(x_1,x_2,x_3)\mid &=&|x_2||x_{3}|+|x_1||x_{3}|
 ,\\
  \mid \sigma_2 \sigma_1(x_1,x_2,x_3)\mid &=&|x_1||x_{2}|+|x_1||x_{3}|,\\
\mid \sigma_2 \sigma_1 \sigma_2(x_1,x_2,x_3)\mid &=&|x_2||x_{3}|+|x_1||x_{3}|+
|x_1||x_{2}|.
\end{eqnarray*}

Now, we  express the condition of Hom-Lie admissibility of a Hom-superalgebra
using  permutations.

\begin{lem}\label{JacobiSuperCommutator2}
A Hom-superalgebra $\A=(V, \mu, \alpha)$ is a Hom-Lie admissible
superalgebra if the following condition holds
\begin{equation}\label{LieAdmiIdentity2}
\sum_{\tau\in \mathcal{S}_3} {(-1)^{\varepsilon ({\tau})}
(-1)^{|\tau(x_1,x_2,x_3)|}\mathfrak{as}_{\alpha}\circ \tau(x_1,x_2,x_3)} =0
\end{equation}
where $x_i$ are in $V$, $(-1)^{\varepsilon ({\tau})}$ is the
signature of the permutation
$\tau$ and $|\tau(x_1,x_2,x_3)|$ is the parity of $\tau$.
\end{lem}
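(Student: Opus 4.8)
The plan is to expand the left-hand side of \eqref{LieAdmiIdentity2} term by term and to recognize it as a nonzero scalar multiple of the cyclic Hom-superJacobi sum already computed in Lemma \ref{JacobiSuperCommutator}. First I would list the six elements of $\mathcal{S}_3$, namely $\id,\sigma_1,\sigma_2,\sigma_1\sigma_2,\sigma_2\sigma_1,\sigma_2\sigma_1\sigma_2$, record their signatures $(-1)^{\varepsilon(\tau)}$, which are $+1,-1,-1,+1,+1,-1$ respectively, and read off the parity $|\tau(x_1,x_2,x_3)|$ of each directly from the list displayed just before the statement. For each $\tau$ I would also write down the reordered triple $\tau(x_1,x_2,x_3)$. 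Setting $x=x_1,\ y=x_2,\ z=x_3$ for readability, this produces six explicit summands whose arguments run over all six permutations of $(x,y,z)$, each carrying a sign equal to the product of $(-1)^{\varepsilon(\tau)}$ and $(-1)^{|\tau(x_1,x_2,x_3)|}$.

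Next I would place these six terms side by side with the six terms on the right-hand side of \eqref{LieAdmiIdentity} in Lemma \ref{JacobiSuperCommutator}, matching them according to the argument fed into $\mathfrak{as}_{\alpha}$. The heart of the proof is the observation that, for every one of the six matched pairs, the coefficient produced by the permutation sum and the coefficient appearing in Lemma \ref{JacobiSuperCommutator} differ by one and the same factor $(-1)^{|x_1||x_3|}$; this is verified pair by pair using $(-1)^{2|x_1||x_3|}=1$ to cancel repeated parities. Consequently the whole sum in \eqref{LieAdmiIdentity2} equals $(-1)^{|x_1||x_3|}$ times the right-hand side of \eqref{LieAdmiIdentity}, namely
$$\sum_{\tau\in\mathcal{S}_3}(-1)^{\varepsilon(\tau)}(-1)^{|\tau(x_1,x_2,x_3)|}\mathfrak{as}_{\alpha}\circ\tau(x_1,x_2,x_3)=(-1)^{|x_1||x_3|}\circlearrowleft_{x,y,z}(-1)^{|x||z|}[\alpha(x),[y,z]],$$
where the last cyclic expression is the right-hand side of \eqref{LieAdmiIdentity} as supplied by Lemma \ref{JacobiSuperCommutator}.

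Finally, since $(-1)^{|x_1||x_3|}=\pm1$ is invertible, the vanishing of the permutation sum in \eqref{LieAdmiIdentity2} is equivalent to the vanishing of $\circlearrowleft_{x,y,z}(-1)^{|x||z|}[\alpha(x),[y,z]]$, which is exactly the Hom-superJacobi identity \eqref{JacobyHomsuperLie} for the supercommutator \eqref{commutator}. By Definition \ref{def:Lieadmis} this makes $\A$ a Hom-Lie admissible superalgebra, which proves the stated implication (and in fact the equivalence). I expect the only real obstacle to be bookkeeping: one must track three independent sign contributions simultaneously---the signature of $\tau$, the parity $|\tau(x_1,x_2,x_3)|$, and the reordering of the arguments of $\mathfrak{as}_{\alpha}$---and check that after multiplying them out a single \emph{global} factor $(-1)^{|x_1||x_3|}$ emerges uniformly across all six terms rather than a term-dependent sign. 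Keeping the arguments of $\mathfrak{as}_{\alpha}$ as the matching key, rather than the permutation $\tau$ itself, is what makes this comparison transparent.
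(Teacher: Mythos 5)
Your proposal is correct and follows essentially the same route as the paper: the paper's own proof consists precisely of the identity $\circlearrowleft_{x_1,x_2,x_3}(-1)^{|x_1||x_3|}[\alpha(x_1),[x_2,x_3]] = (-1)^{|x_1||x_3|}\sum_{\tau\in\mathcal{S}_3}(-1)^{\varepsilon(\tau)}(-1)^{|\tau(x_1,x_2,x_3)|}\mathfrak{as}_{\alpha}\circ\tau(x_1,x_2,x_3)$, stated as a ``straightforward calculation'' resting on Lemma \ref{JacobiSuperCommutator}. Your term-by-term matching of the six permutation summands against the six associator terms of \eqref{LieAdmiIdentity}, with the uniform global factor $(-1)^{|x_1||x_3|}$, is exactly the verification the paper leaves implicit, and your sign bookkeeping checks out.
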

\begin{proof}
By straightforward calculation, the associated supercommutator bracket
satisfies
\begin{equation*}
\circlearrowleft_{x_1,x_2,x_3}(-1)^{|x_1||x_3|}[\alpha (x_1),[x_2,x_3]]
=(-1)^{|x_1||x_3|}\sum_{\tau\in \mathcal{S}_3} {(-1)^{\varepsilon ({\tau})}
(-1)^{|\tau(x_1,x_2,x_3)|}\mathfrak{as}_{\alpha}\circ \tau(x_1,x_2,x_3)}.
\end{equation*}
\end{proof}
It turns out that, for the associated supercommutator   of a Hom-superalgebra,
  the Hom-superJacobi identity (\ref{JacobyHomsuperLie}) is equivalent to
\begin{equation*}
\sum_{\tau\in \mathcal{S}_3} {(-1)^{\varepsilon ({\tau})}
(-1)^{|\tau(x_1,x_2,x_3)|}\mathfrak{as}_{\alpha}\circ \tau(x_1,x_2,x_3)}=0.
\end{equation*}
We introduce the notion of $G$-Hom-associative superalgebra, where $G$ is
 a subgroup of the permutations group $\mathcal{S}_3$.
\begin{defn}
Let $G$ be a subgroup of the permutations group $\mathcal{S}_3$, a
Hom-superalgebra on $V$ defined by the multiplication $\mu$ and a
homomorphism $\alpha$ is said to  be $G$-Hom-associative superalgebra if
\begin{equation}\label{admi}
\sum_{\tau\in G} {(-1)^{\varepsilon ({\tau})}
(-1)^{|\tau(x_1,x_2,x_3)|}\mathfrak{as}_{\alpha}\circ \tau(x_1,x_2,x_3)}=0.
\end{equation}
where $x_i$ are in $V$, $(-1)^{\varepsilon ({\tau})}$ is the
signature of the permutation
and $|\tau(x_1,x_2,x_3)|$ is the parity of $\tau$ defined above.
\end{defn}

The following result is a graded version  of the results
obtained in (\cite{GR04,MS}).

\begin{prop}
Let $G$ be a subgroup of the permutations group
$\mathcal{S}_3$. Then any $G$-Hom-associative
superalgebra is a Hom-Lie admissible superalgebra.
\end{prop}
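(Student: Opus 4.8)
The plan is to reduce the statement to the identity established in Lemma \ref{JacobiSuperCommutator2}, where it is shown that for the supercommutator of a Hom-superalgebra the Hom-superJacobi identity \eqref{JacobyHomsuperLie} is equivalent to the vanishing of the full sum
$$
\sum_{\tau\in \mathcal{S}_3} (-1)^{\varepsilon(\tau)}(-1)^{|\tau(x_1,x_2,x_3)|}\,\mathfrak{as}_{\alpha}\circ\tau(x_1,x_2,x_3)=0 .
$$
So it suffices to show that the $G$-Hom-associativity condition \eqref{admi} forces this $\mathcal{S}_3$-sum to vanish. First I would decompose the group into right cosets of $G$, writing $\mathcal{S}_3=\bigsqcup_{j} G\rho_j$ for a chosen set of representatives $\rho_j$, so that every $\tau\in\mathcal{S}_3$ is uniquely of the form $\tau=g\rho_j$ with $g\in G$.

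Next I would split the $\mathcal{S}_3$-sum along these cosets and factor each block. For a fixed representative $\rho_j$, putting $y=\rho_j(x_1,x_2,x_3)$ and letting $g$ run over $G$, the summand becomes $\mathfrak{as}_{\alpha}\circ g(y)$ weighted by $(-1)^{\varepsilon(g\rho_j)}(-1)^{|g\rho_j(x_1,x_2,x_3)|}$. Using multiplicativity of the signature, $(-1)^{\varepsilon(g\rho_j)}=(-1)^{\varepsilon(g)}(-1)^{\varepsilon(\rho_j)}$, together with the additivity of the parity under composition, $|g\rho_j(x_1,x_2,x_3)|=|\rho_j(x_1,x_2,x_3)|+|g(y)|$, the scalar $(-1)^{\varepsilon(\rho_j)}(-1)^{|\rho_j(x_1,x_2,x_3)|}$ pulls out of the inner sum. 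What remains inside is exactly
$$
\sum_{g\in G}(-1)^{\varepsilon(g)}(-1)^{|g(y)|}\,\mathfrak{as}_{\alpha}\circ g(y),
$$
which is the left-hand side of the $G$-Hom-associativity condition \eqref{admi} evaluated at the homogeneous tuple $y$. Since $y$ is merely a reordering of arbitrary homogeneous elements, \eqref{admi} applies and this inner sum vanishes. Hence every coset block is zero, the full $\mathcal{S}_3$-sum is zero, and by Lemma \ref{JacobiSuperCommutator2} the supercommutator satisfies \eqref{JacobyHomsuperLie}, so $\A$ is Hom-Lie admissible.

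The step requiring the most care is the additivity of the parity under composition, $|g\rho(x_1,x_2,x_3)|=|\rho(x_1,x_2,x_3)|+|g(\rho(x_1,x_2,x_3))|$, which is precisely what keeps the sign bookkeeping coherent. In the text this relation is spelled out only for the generators and for the six listed composites, whereas the coset argument needs it for every factorization $\tau=g\rho_j$; I would therefore check that the inductive definition of the parity of a composition propagates this cocycle relation to all of $\mathcal{S}_3$ (equivalently, that the parity is a $1$-cocycle for the action of $\mathcal{S}_3$ on the tuple). Once this is in hand, the signature being a genuine homomorphism makes the factorization automatic, and the argument runs uniformly for every subgroup $G\subseteq\mathcal{S}_3$.
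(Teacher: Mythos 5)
Your proof is correct and takes essentially the same route as the paper: decompose $\mathcal{S}_3$ into cosets of $G$, annihilate each coset block using the $G$-Hom-associativity condition \eqref{admi}, and conclude via Lemma \ref{JacobiSuperCommutator2}. Your version is in fact more careful than the paper's one-line coset computation: the right-coset factorization $\tau=g\rho_j$ combined with the parity cocycle relation is precisely what turns each inner sum into an instance of \eqref{admi} at the permuted tuple, a point the paper leaves implicit (it even writes the decomposition with left cosets $\tau_1 G$, for which the factorization would not match \eqref{admi} directly).
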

\begin{proof}The supersymmetry follows straightaway from the
definition.

We have a subgroup $G$ in $\mathcal{S}_3$. Take the set of conjugacy
class $\{g G\}_{g\in I}$  where $I\subseteq G$, and for any
$\tau_1, \tau_2\in I,\sigma_1 \neq \tau_2 \Rightarrow \tau_1
G\bigcap \tau_2 G =\emptyset$. Then

\begin{eqnarray*}&& \sum_{\tau\in \mathcal{S}_3}{(-1)^{\varepsilon ({\tau})}(-1)^{|\tau(x_1,x_2,x_3)|}
\mathfrak{as}_{\alpha}\circ
\tau}(x_1,x_2,x_3)=\\&& \sum_{\tau_1\in I}{\sum_{\tau_2\in \tau_1
G}{(-1)^{\varepsilon ({\tau_2})}(-1)^{|\tau_2(x_1,x_2,x_3)|}\mathfrak{as}_
{\alpha}\circ \tau_2}}(x_1,x_2,x_3)=0
\end{eqnarray*}
where $(x_1,x_2,x_3)\in V$, with $V$  the underlaying superspace of the $G$-Hom-associative
superalgebra.
\end{proof}
It follows that in particular, we have:
\begin{cor}
Let $G$ be a subgroup of the permutations group
$\mathcal{S}_3$. Then any $G$-associative
superalgebra is a Lie admissible superalgebra.
\end{cor}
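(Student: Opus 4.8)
The plan is to obtain the Corollary as the special case $\alpha=\id$ of the preceding Proposition. First I would observe that when $\alpha=\id$ the $\alpha$-associator reduces to the ordinary super-associator, since $\alpha$ enters only in the outer arguments of each product; explicitly $\mathfrak{as}_{\alpha}(x_1,x_2,x_3)$ becomes $\mu(x_1,\mu(x_2,x_3))-\mu(\mu(x_1,x_2),x_3)$. Consequently the defining relation \eqref{admi} of a $G$-Hom-associative superalgebra becomes, word for word, the defining relation of a $G$-associative superalgebra, so the two notions coincide once the twisting map is the identity.

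Next I would invoke the Remark following Lemma \ref{JacobiSuperCommutator}: with $\alpha=\id$ the Hom-superJacobi identity \eqref{JacobyHomsuperLie} is exactly the classical superJacobi identity. Hence a Hom-Lie admissible superalgebra with $\alpha=\id$ is precisely a Lie admissible superalgebra, namely a superalgebra whose associated supercommutator \eqref{commutator} satisfies the superJacobi identity. With these two identifications in place, the statement follows directly: the preceding Proposition shows that every $G$-Hom-associative superalgebra is Hom-Lie admissible, and specializing to $\alpha=\id$ turns this into the assertion that every $G$-associative superalgebra is Lie admissible.

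I expect no substantial obstacle here, since the Corollary is a genuine specialization rather than an independent result. The only point worth checking is that the combinatorial skeleton of the Proposition's proof---the signature-weighted sum over $\mathcal{S}_3$ and its decomposition into cosets $\tau_1 G$---is untouched by the choice of $\alpha$, as those data depend only on the permutations and on the parities $|x_i|$, not on the twisting map. Once this is noted, the coset-cancellation argument that proves the Proposition applies verbatim with $\alpha$ replaced by $\id$, and no further computation is needed.
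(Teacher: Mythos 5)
Your proposal is correct and matches the paper's own treatment: the Corollary is stated there as an immediate specialization ("It follows that in particular") of the preceding Proposition, obtained by setting $\alpha=\id$ so that the $\alpha$-associator becomes the ordinary super-associator and the Hom-superJacobi identity becomes the classical one. Your additional check that the coset-cancellation argument is independent of $\alpha$ is a sound (if implicit in the paper) observation.
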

Now, we provide a classification of the Hom-Lie admissible
superalgebras through $G$-Hom-associative superalgebras. The
ungraded $G$-associative algebras in classical case was studied in
\cite{GR04}, then extended to $G$-Hom-associative algebras in
(\cite{MS}).

 The subgroups of $\mathcal{S}_3$ are
$$G_1=\{Id\}, ~G_2=\{Id,\sigma_{1}\},~G_3=\{Id,\sigma_{2}\},~G_4=\{Id,\sigma_2\sigma_1\sigma_2\},~G_5=A_3 ,~G_6=\mathcal{S}_3,$$
where $A_3$ is the alternating group.

We obtain the following type of Hom-Lie
admissible superalgebras.
\begin{itemize}
\item The  $G_1$-Hom-associative superalgebras  are the Hom-associative
superalgebras defined above.

\item The  $G_2$-Hom-associative superalgebras satisfy the identity
\begin{equation}\nonumber
\mu(\alpha(x),\mu (y,z))- \mu (\mu (x,y),\alpha
(z))=(-1)^{|x||y|}(\mu(\alpha(y),\mu (x,z))-\mu (\mu (y,x),\alpha
(z)))
\end{equation}
\item The  $G_3$-Hom-associative superalgebras satisfy the identity
\begin{equation}\nonumber
\mu(\alpha(x),\mu (y,z))-\mu (\mu (x,y),\alpha
(z))=(-1)^{|y||z|}(\mu(\alpha(x),\mu (z,y))-\mu (\mu (x,z),\alpha
(y)))
\end{equation}
\item The  $G_4$-Hom-associative superalgebras satisfy the identity
\begin{equation}\nonumber
\mu(\alpha(x),\mu (y,z))-\mu (\mu (x,y),\alpha
(z))=(-1)^{|x||y|+|x||z|+|y||z|}(\mu(\alpha(z),\mu (y,x))-\mu (\mu
(z,y),\alpha (x)))
\end{equation}
\item The  $G_5$-Hom-associative superalgebras satisfy the identity
\begin{eqnarray}\nonumber
\nonumber \mu(\alpha(x),\mu (y,z))-\mu (\mu (x,y),\alpha
(z))+(-1)^{|x||y|+|x||z|}(\mu(\alpha(y),\mu (z,x)+\mu (\mu
(y,z),\alpha
(x)))= \\
\nonumber -(-1)^{|x||z|+|y||z|}(\mu(\alpha(z),\mu (x,y))+\mu (\mu
(z,x),\alpha (y)))
\end{eqnarray}\nonumber
\item The  $G_6$-Hom-associative superalgebras are the Hom-Lie admissible
superalgebras.
\end{itemize}

\begin{rem}
Moreover, if in the previous identities we consider $\alpha=\id $, then we
obtain a classification of Lie-admissible superalgebras.
\end{rem}
\begin{rem}One may call $G_2$-Hom-associative (resp. $G_2$-associative)
 superalgebras  \emph{Hom-Vinberg superalgebras} (resp. \emph{Vinberg superalgebras})
and $G_3$-Hom-associative (resp. $G_3$-associative) superalgebras
\emph{Hom-pre-Lie superalgebras} (resp. \emph{pre-Lie
superalgebras}). Notice that a Hom-pre-Lie superalgebra is the
opposite algebra of a Hom-Vinberg superalgebra. Therefore, they
actually form a same class.
\end{rem}

\section{$\mathbb{Z}_2$-Graded  Hartwig-Larsson-Silvestrov theorem}

In this Section, we describe and prove  the
Hartwig-Larsson-Silvestrov theorem (see \cite{HLS}, theorem 5) in
the case of $\mathbb{Z}_2$-graded algebra. A more general graded
version of this theorem was mentioned in (\cite{LSgraded}). We aim
to consider it deeply for superalgebras case.

  Let $\A=\A_0 \oplus \A_1$ be an associative superalgebra. We assume that $\A$
   is supercommutative, that is for homogeneous elements $a,b$, the identity
    $a b=(-1)^{|a||b|}ba $ holds. For example,  $\A_0 = \mathbb{C}[t,t^{-1}]$ and $\A_1 =\theta
\A_0$ where $\theta$ is the Grassman variable $(\theta^2 = 0)$. Let $\sigma$ be  an even superalgebra
 endomorphism of $\A$. Then,
$\A$ is a bimodule over itself, the left (resp. right) action is
defined by $a \cdot_{l} b= \sigma(a) b$ (resp. $b \cdot_r a=b a$). For
simplicity, we denote the module multiplication by a \emph{dot} and the superalgebra
multiplication by juxtaposition. In the sequel the elements of $\A$ are supposed
to be homogeneous.

\begin{defn}
Let $i\in \{0,1\}$.  A $\sigma$-derivation $D_i$ on $\A$ is an endomorphism satisfying:
 $$D_i (a  b)=D_i(a) b +(-1)^{i|a|}\sigma(a) D_i (b)$$ where $a,b\in
 \A$ are homogeneous element and  $|a|$ is the parity of $a$.

A $\sigma$-derivation $D_0$ is called  even $\sigma-$derivation and $D_1$
is called  odd
$\sigma$-derivation. The set of all $\sigma$-derivations is denoted by
$Der_\sigma(\A)$. Therefore, $Der_\sigma(\A)=Der_\sigma(\A)_0\oplus
Der_\sigma(\A)_1$, where $Der_\sigma(\A)_0$ (resp.  $Der_\sigma(\A)_1$) is
the space of even (resp.
odd) $\sigma$-derivations. The structure of $\A$-supermodule of $Der_\sigma(\A)$ is
as usual. Let $D\in Der_\sigma(\A)$, the annihilator $Ann(D)$   is the
set of all $a\in\A$ such that $a\cdot D =0$. We set $\A \cdot D=\{a\cdot D
: \ a\in \A \}$ be an $\A$-subsupermodule of $Der_\sigma(\A)$.
\end{defn}



Let $\sigma:\A
\rightarrow \A$ be a fixed even endomorphism, $\Delta$ an even $\sigma$-derivation
 ($ \Delta \in Der_\sigma(\A)_0 $)
 and  $\delta$ be an element in $\A$.

\begin{thm}\label{HLS} If \begin{equation}\label{HLSCond1} \sigma(Ann \Delta)\subset Ann \Delta
\end{equation}  holds then
the map $[-,-]_{\sigma}: \A\Delta \times
\A\Delta \rightarrow \A\Delta$ defined by setting
\begin{equation}\label{*}
[a\cdot\Delta,b\cdot\Delta]_{\sigma} = (\sigma(a)\cdot\Delta)\circ (b\cdot \Delta)
 - (-1)^{ \mid a\mid\mid b\mid}(\sigma(b)\cdot\Delta)\circ(a\cdot\Delta)  \ \ for
 \ \ a,b \in \A
 \end{equation}
where $\circ$ denotes  the composition of functions, is a well-defined
 superalgebra bracket on
 the superspace $\A\cdot\Delta$ and satisfies the following
identities for $a,b,c \in \A$
\begin{eqnarray}\label{**}[a\cdot\Delta,b\cdot\Delta]_{\sigma} &=&
 (\sigma(a)\Delta (b) -
(-1)^{ \mid a\mid\mid b\mid}\sigma(b)\Delta(a))\cdot\Delta \\
\label{**2}
[ a\cdot\Delta, b\cdot\Delta ]_{\sigma} &=&
 - (-1)^{ \mid a\mid\mid b\mid}[b\cdot\Delta,a\cdot\Delta]_\sigma
\end{eqnarray}
In addition, if \begin{equation}
\label{HLSCond2}\Delta(\sigma(a))=\delta \sigma(\Delta(a)) \ \ \text {for }
 \ \ a\in \A
 \end{equation}
 holds, then
\begin{eqnarray}\label{***}
\circlearrowleft_{a,b,c}{(-1)^{|a||c|}([\sigma(a)\cdot\Delta,[b\cdot\Delta,c\cdot
\Delta]_{\sigma}]_{\sigma}
+\delta [a\cdot\Delta,[b\cdot\Delta,c\cdot\Delta]_{\sigma}]_{\sigma})}= 0
\end{eqnarray}
\end{thm}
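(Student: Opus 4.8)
The plan is to establish the conclusions obtained under \eqref{HLSCond1}, namely \eqref{**} and \eqref{**2} together with well-definedness, and then the twisted Jacobi identity \eqref{***} obtained under the extra hypothesis \eqref{HLSCond2}, all by direct computation inside $\End(\A)$, using that $\Delta$ is an even $\sigma$-derivation, that $\sigma$ is an even homomorphism, and that $\A$ is supercommutative.

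First I would prove the bracket formula \eqref{**}. Evaluating the composition in \eqref{*} on an arbitrary homogeneous $x\in\A$ and applying the $\sigma$-derivation rule $\Delta(uv)=\Delta(u)v+\sigma(u)\Delta(v)$ gives
$$(\sigma(a)\cdot\Delta)\circ(b\cdot\Delta)(x)=\sigma(a)\Delta(b)\Delta(x)+\sigma(a)\sigma(b)\Delta^2(x),$$
and likewise with $a,b$ interchanged. The crucial point is that the two second-order contributions combine into $\big(\sigma(a)\sigma(b)-(-1)^{|a||b|}\sigma(b)\sigma(a)\big)\Delta^2(x)$, which vanishes because $\sigma$ is even and $\A$ is supercommutative, so that $\sigma(a)\sigma(b)=(-1)^{|a||b|}\sigma(b)\sigma(a)$. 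What survives is first order in $x$ and produces exactly the coefficient of $\Delta$ displayed in \eqref{**}. The skew-symmetry \eqref{**2} is then immediate by interchanging $a$ and $b$ in \eqref{*}. Well-definedness of \eqref{*} on $\A\cdot\Delta$ is where \eqref{HLSCond1} enters: if $a\cdot\Delta=a'\cdot\Delta$ then $a-a'\in Ann(\Delta)$, hence $\sigma(a-a')\in Ann(\Delta)$ by \eqref{HLSCond1}, so $\sigma(a)\cdot\Delta=\sigma(a')\cdot\Delta$; thus $\alpha(a\cdot\Delta):=\sigma(a)\cdot\Delta$ descends to $\A\cdot\Delta$, and since composition of maps is unambiguous, the right-hand side of \eqref{*} depends only on the classes $a\cdot\Delta$ and $b\cdot\Delta$.

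For \eqref{***} I would apply \eqref{**} twice. Writing $\{b,c\}:=\sigma(b)\Delta(c)-(-1)^{|b||c|}\sigma(c)\Delta(b)$, so that $[b\cdot\Delta,c\cdot\Delta]_\sigma=\{b,c\}\cdot\Delta$, one further application of \eqref{**} to the outer brackets yields
$$[\sigma(a)\cdot\Delta,\{b,c\}\cdot\Delta]_\sigma=\big(\sigma^2(a)\Delta\{b,c\}-(-1)^{|a|(|b|+|c|)}\sigma\{b,c\}\,\Delta(\sigma(a))\big)\cdot\Delta,$$
and the analogous expression without the leading $\sigma$ for $[a\cdot\Delta,\{b,c\}\cdot\Delta]_\sigma$. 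The hypothesis \eqref{HLSCond2} is then substituted in the form $\Delta(\sigma(a))=\delta\,\sigma(\Delta(a))$, after which each summand of \eqref{***} becomes a single term of the shape $(\text{element of }\A)\cdot\Delta$, the element being computed by expanding $\Delta\{b,c\}$ and $\sigma\{b,c\}$ through the graded $\sigma$-Leibniz rule and the multiplicativity $\sigma(u)\sigma(v)=\sigma(uv)$. It remains to show that the cyclic sum of these coefficients, weighted by the signs $(-1)^{|a||c|}$, lies in $Ann(\Delta)$; this makes the corresponding element of $\A\cdot\Delta$ vanish and so proves \eqref{***}.

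The expansions are routine; the main obstacle is the last step. After the substitutions the weighted cyclic sum splits into groups of terms carrying the patterns $\sigma^2(\cdot)\,\sigma\Delta(\cdot)\,\Delta(\cdot)$, $\sigma^2(\cdot)\,\Delta(\cdot)\,\Delta(\cdot)$, and the second-order type $\sigma^2(\cdot)\,\Delta^2(\cdot)$, and one must verify that the three cyclic summands annihilate $\Delta$ once all factors are put in a common order via $uv=(-1)^{|u||v|}vu$. The delicate feature, exactly as in the ungraded Hartwig--Larsson--Silvestrov computation, is that the second-order contributions issuing from $\Delta\{b,c\}$ must be handled together with the first-order ones, and that the reduction genuinely uses the twisting relation \eqref{HLSCond2} to trade $\Delta\sigma$ for $\delta\,\sigma\Delta$; checking that the Koszul signs produced by the cyclic permutation and by reordering conspire to place the result in $Ann(\Delta)$ is the heart of the argument. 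I would organize it by fixing once and for all a normal ordering of the symbols $\sigma^2(\cdot),\sigma\Delta(\cdot),\Delta(\cdot)$ and comparing coefficients across the three summands.
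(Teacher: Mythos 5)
Your proposal takes essentially the same route as the paper's proof: \eqref{**} is obtained by evaluating the composition on a test element and using supercommutativity of $\A$ to kill the $\Delta^{2}$-term, well-definedness follows from \eqref{HLSCond1} applied to $a-a'\in Ann(\Delta)$, and \eqref{***} is verified by expanding both cyclic sums through \eqref{**}, the graded Leibniz rule, and the substitution $\Delta(\sigma(\cdot))=\delta\,\sigma(\Delta(\cdot))$. The only caveat is that you defer the final cancellation rather than carrying it out; for the record it works exactly as you predict: the $\sigma^{2}(\cdot)\sigma^{2}(\cdot)\Delta^{2}(\cdot)$-terms and the $\sigma(\Delta(\cdot))\sigma(\Delta(\cdot))$-terms cancel internally within each of the two cyclic sums, and the surviving first-order terms of the first sum are, after applying \eqref{HLSCond2}, the negatives (up to a cyclic relabelling and the Koszul signs you mention) of the survivors of the $\delta$-weighted sum.
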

\begin{proof}

We show first  that $[-,-]_{\sigma}$ is a well defined function.
That is if
$a_{1}\cdot  \Delta = a_{2}\cdot  \Delta$ then $[a_{1}\cdot \Delta
,b\cdot \Delta]_{\sigma}=[a_{2} \cdot \Delta ,b \cdot \Delta]_{\sigma}$ and
$[b \cdot\Delta
,a_{1} \cdot\Delta]_{\sigma}=[b\cdot\Delta , a_{2}\cdot
\Delta]_{\sigma} $ for
$a_{1},a_{2},b\in \A.$

We compute
\begin{eqnarray*}
&&[a_{1} \cdot\Delta ,b\cdot \Delta]_{\sigma}
-[a_{2} \cdot \Delta ,b \cdot\Delta]_{\sigma}\\
&&=(\sigma(a_{1})\cdot \Delta)\circ(b\cdot\Delta)
-(-1)^{\mid a_{1}\mid\mid b\mid}(\sigma(b)\cdot \Delta)\circ(a_{1}\cdot\Delta)
\\ && -(\sigma(a_{2})\cdot\Delta)\circ(b \cdot\Delta)
+(-1)^{\mid a_{2}\mid \mid b\mid}(\sigma(b)\cdot\Delta )\circ(a_{2}\cdot\Delta)
\\
&&=(\sigma(a_{1}-a_{2})\cdot\Delta)\circ(b\cdot\Delta)
-(-1)^{\mid a_{1}\mid\mid b\mid}(\sigma(b)\cdot\Delta)\circ
((a_1-(-1)^{(\mid a_{2}\mid-\mid a_{1}\mid) \mid b\mid}a_{2})\cdot\Delta)
\end{eqnarray*}
Obviously,  $a_{1}   \cdot       \Delta=a_{2}   \cdot       \Delta$ is equivalent to $(a_{1}-a_{2})\in
Ann(\Delta)$. Hence, using the assumption (\ref{HLSCond1}), we also have
$\sigma(a_{1}-a_{2})\in Ann(\Delta).$
Then, since $| a_{2}|-| a_{1}|=0$ and
$\sigma(a_{1}-a_{2})\in Ann(\Delta)$,
we obtain $$[a_{1} \cdot\Delta ,b\cdot \Delta]_{\sigma}
-[a_{2} \cdot \Delta ,b \cdot\Delta]_{\sigma}=0.$$
Similarly, we have $[b \cdot\Delta
,a_{1} \cdot\Delta]_{\sigma}=[b\cdot\Delta , a_{2}\cdot
\Delta]_{\sigma}.$

Next, we show that the superspace $\A \Delta$ is closed under
$[\cdot,\cdot]_{\sigma}$. Indeed,
\begin{eqnarray*}
&&[a \cdot\Delta,b\cdot\Delta]_{\sigma}(c)\\
&&=(\sigma(a)\cdot\Delta)\circ(b\cdot\Delta)(c)
-(-1)^{|a||b|}(\sigma (b)\cdot\Delta )\circ(a\cdot\Delta)(c)\\
&&=(\sigma(a)\cdot\Delta)(b\Delta(c))
-(-1)^{|a||b|}(\sigma (b)\cdot\Delta)(a\Delta(c))\\
&&=\sigma(a)(\Delta(b)\Delta(c)+(-1)^{|b|}\sigma (b)\Delta(\Delta(c))
-(-1)^{|a||b|}(\sigma (b)(\Delta(a)\Delta(c))
+(-1)^{|a|}\sigma (a)\Delta (\Delta(c)))
\\&& =(\sigma(a)\Delta(b)-(-1)^{|a||b|}\sigma(b)\Delta(a))\Delta(c)
+(-1)^{|b|}(\sigma(a)\sigma(b)-(-1)^{|a||b|}\sigma(b)\sigma(a))
 \Delta(\Delta(c))
\end{eqnarray*}
Since $\A$ is supercommutative the second term vanishes. Therefore, we
obtain formula (\ref{**}) which shows that the bracket is closed.

Now, we prove that formula (\ref{***}) holds. First, we consider part
given by the first term.
part.
 \begin{eqnarray*}
&& \circlearrowleft_{a,b,c}{(-1)^{|a||c|}
[\sigma(a)\cdot\Delta,[b\cdot\Delta,c\cdot\Delta]_{\sigma}]_{\sigma}}
\\&& =\circlearrowleft_{a,b,c}{(-1)^{|a||c|}
[\sigma(a)\cdot\Delta,(\sigma(b)\Delta(c)-(-1)^{|b||c|}\sigma(c)\Delta(b))\cdot\Delta]_{\sigma}}
\\ && =\circlearrowleft_{a,b,c}(-1)^{|a||c|}
(\sigma^2(a)\Delta(\sigma(b)\Delta(c)-(-1)^{|b||c|}\sigma(c)\Delta(b))
\\ && -(-1)^{|a|(|b|+|c|)}(\sigma(\sigma(b)\Delta(c)
-(-1)^{|b||c|}\sigma(c)\Delta(b))\Delta (\sigma (a)))\cdot\Delta  \\
&& =\circlearrowleft_{a,b,c}(-1)^{|a||c|}
(\sigma^2(a)(\Delta(\sigma(b))\Delta (c)+
\sigma^2(b)\Delta^2(c)
-(-1)^{|b||c|}(\Delta(\sigma(c)\Delta(b)+\sigma^2(c)\sigma^2(b)))
\\ && -(-1)^{|a|(|b|+|c|)}(\sigma^2(b)\sigma(\Delta(c))
-(-1)^{|b||c|}\sigma^2(c)\sigma(\Delta(b)))
\Delta(\sigma (a))))   \cdot      \Delta\\
&&=\circlearrowleft_{a,b,c}(-1)^{|a||c|}(\sigma^2(a)\Delta(\sigma (b))\Delta (c)
+\sigma^2(a)\sigma^2 (b))\Delta^2 (c)\\
&&
-(-1)^{|b||c|}\sigma^2(a)\Delta(\sigma(c))\Delta(b)
-(-1)^{|b||c|}\sigma^2(a)\sigma^2(c)\Delta^2(b)\\
&&
-(-1)^{|a|(|b|+|c|)}\sigma^2(b)\sigma(\Delta(c))\Delta(\sigma(a))
+(-1)^{|a||b|+|a||c|+|b||c|)}
\sigma^2(c)\sigma(\Delta(b))\Delta(\sigma(a)) )  \cdot      \Delta
 \end{eqnarray*}
 where $\sigma^2=\sigma\circ\sigma$ and $\Delta^2=\Delta\circ\Delta$.

 Applying cyclic summation to the second and fourth terms, they vanish.
 Using assumption and doing the same with the fifth and sixth in the previous
 identity we get also 0.

Finally, it remains
\begin{eqnarray}\label{part1}
&&\circlearrowleft_{a,b,c}{(-1)^{|a||c|}
[\sigma(a)\cdot\Delta,[b\cdot\Delta,c\cdot\Delta]_{\sigma}]_{\sigma}}=\\
&& \circlearrowleft_{a,b,c}(-1)^{|a||c|}(\sigma^2(a)\Delta(\sigma (b))\Delta (c)
-(-1)^{|b||c|}\sigma^2(a)\Delta(\sigma(c))\Delta(b)
 )  \cdot      \Delta \nonumber
\end{eqnarray}
Now, we consider the  part in (\ref{***})given by the second term.
\begin{eqnarray*}
&& \circlearrowleft_{a,b,c}{(-1)^{|a||c|}
\delta\ [a\cdot\Delta,[b\cdot\Delta,c\cdot\Delta]_{\sigma}]_{\sigma}}
\\&& =\circlearrowleft_{a,b,c}{(-1)^{|a||c|}
\delta\ [a\cdot\Delta,((\sigma(b)\Delta(c)-
(-1)^{|b||c|}\sigma(c)\Delta(b))\cdot\Delta]_{\sigma}}
\\ && =\circlearrowleft_{a,b,c}(-1)^{|a||c|}
\delta\ ((\sigma (a)\Delta(\sigma(b)\Delta(c)
-(-1)^{|b||c|}\sigma(c)\Delta(b))\cdot\Delta
\\ && -(-1)^{|a|(|b|+|c|)}(\sigma^2(b)(\sigma\Delta(c))
+(-1)^{|b||c|}\sigma^2(c)\sigma(\Delta(b))\Delta (a)))\cdot\Delta
\\ && =\circlearrowleft_{a,b,c}(-1)^{|a||c|}
\delta\ ((-1)^{|b||c|}\sigma (a)\Delta(\Delta(c)\sigma(b))
-\sigma (a)\Delta(\Delta(b)\sigma(c))
\\ && -(-1)^{|a|(|b|+|c|)}\sigma^2(b)(\sigma\Delta(c))
+(-1)^{|b||c|}\sigma^2(c)\sigma(\Delta(b))\Delta (a))\cdot\Delta
\\
&&=\circlearrowleft_{a,b,c}(-1)^{|a||c|}\delta\ (
(-1)^{|b||c|} \delta\ \sigma(a)\Delta^2(c)\sigma (b)
(-1)^{|b||c|}\sigma(a)\sigma(\Delta(c))\Delta(\sigma(b))
\\ &&
 -\sigma(a)\Delta^2(b)\sigma(c)
-\sigma(a)\sigma(\Delta(b))\Delta(\sigma(c))
\\ &&-(-1)^{|a|(|b|+|c|)}\sigma^2(b)\sigma(\Delta(c))\Delta(a)
+(-1)^{|b||c|}\sigma^2(c)\sigma(\Delta(b))\Delta (a))
\cdot      \Delta\\
&& =\circlearrowleft_{a,b,c}(-1)^{|a||c|} \delta\ (
(-1)^{|b||c|} \sigma(a)\Delta^2(c)\sigma(b) +
(-1)^{|b||c|}\delta\ \sigma(a)\sigma(\Delta(c))\sigma(\Delta(b))
\\ &&
 -\sigma(a)\Delta^2(b)\sigma(c)
-\delta\ \sigma(a)\sigma(\Delta(b))\sigma(\Delta(c))
\\ &&-(-1)^{|a|(|b|+|c|)}\sigma^2(b)\sigma(\Delta(c))\Delta(a)
+(-1)^{|b||c|}\sigma^2(c)\sigma(\Delta(b))\Delta (a))
\cdot      \Delta
 \end{eqnarray*}
 The four first terms vanish up to cyclic summation. It remains
 \begin{eqnarray*}
&& \circlearrowleft_{a,b,c}{(-1)^{|a||c|}
\delta\ [a\cdot\Delta,[b\cdot\Delta,c\cdot\Delta]_{\sigma}]_{\sigma}}
\\&& =\circlearrowleft_{a,b,c}{
(-(-1)^{|a||b|}\sigma^2(b)\Delta(\sigma(c))\Delta(a)
+(-1)^{|a||c|+|b||c|}\sigma^2(c)\Delta(\sigma(b))\Delta (a))
\cdot      \Delta}
\end{eqnarray*}
Which is, up to permutation, the opposite of expression (\ref{part1}). This ends
 the proof.
\end{proof}
\section{A $q$-deformed  Witt superalgebra}
In this Section, we provide an example of infinite dimensional
Hom-Lie superalgebra. We construct using the previous theorem a
realization of
 the $q$-deformed Witt superalgebra, which carries a
structure of Hom-Lie superalgebra for a suitable choice of
the twist map.

 Let $\A$ be the complex superalgebra
 $\A=\A_0 \oplus \A_1$ where $\A_0 = \mathbb{C}[t,t^{-1}]$ is  the Laurent
 polynomials in one variable and $\A_1 =\theta\ \mathbb{C}[t,t^{-1}] $,
 where $\theta$ is the Grassman variable $(\theta^2 = 0)$. We assume that $t$
 and $\theta$ commute. The generators of
 $\A$ are of the form $t^n$ and $\theta t^n$ for $n\in \mathbb{Z}$.

 Let $q\in \mathbb{C}\backslash \{0,1\}$ and $n\in\mathbb{N}$, we set
 $\{n\}=\frac{1-q^n}{1-q}$, a $q$-number. The $q$-numbers have the following
 properties $\{n+1\}=1+q \{n\}=\{n\}+q^n$ and $\{n+m\}=\{n\}+q^n\{m\}.$

  Let  $\sigma$ be
 the  algebra endomorphism on $\A$ defined by
 $$\sigma(t^n)= q^n t^n\quad \text{ and }\quad \sigma(\theta)=q \theta.$$
 Let $\partial _t$ and $\partial _\theta $ be two linear maps on $\A$
 defined by
 \begin{eqnarray*}
 \partial _t (t^n)=\{n\} t^n, & \partial _t (\theta t^n)=\{n\} \theta t^n,\\
 \partial _\theta(t^n)=0,   &\partial _\theta(\theta t^n)=q^n t^n.
 \end{eqnarray*}

 \begin{lem} The linear map  $\Delta=\partial_t +\theta \partial_\theta$
 on $\A$ is a  an even
 $\sigma$-derivation.

 Hence,
 \begin{eqnarray*}
 && \Delta (t^n)=\{n\} t^n, \\
 && \Delta (\theta t^n)=\{n+1\} \theta t^n.
 \end{eqnarray*}
 \end{lem}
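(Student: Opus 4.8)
The plan is to verify two separate claims: first, that $\Delta = \partial_t + \theta\,\partial_\theta$ is an even $\sigma$-derivation in the graded sense of the earlier definition, and second, that it acts on the two families of generators by the stated formulas. The key observation driving everything is that $\partial_t$ and $\partial_\theta$ are already given explicitly on the generators $t^n$ and $\theta t^n$, so once I reduce the $\sigma$-derivation condition to these generators by bilinearity, the computation becomes a finite check. I would begin by recalling that an even $\sigma$-derivation must satisfy $\Delta(ab) = \Delta(a)b + (-1)^{0\cdot|a|}\sigma(a)\Delta(b) = \Delta(a)b + \sigma(a)\Delta(b)$, so the parity sign is trivial here and I only need the ordinary twisted Leibniz rule.

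First I would confirm that $\Delta$ is even, i.e. $\Delta(\A_0)\subseteq\A_0$ and $\Delta(\A_1)\subseteq\A_1$. This is immediate from the defining formulas: $\partial_t$ preserves both $\A_0$ and $\A_1$, while $\theta\,\partial_\theta$ sends $t^n\in\A_0$ to $0$ and sends $\theta t^n\in\A_1$ to $\theta q^n t^n\in\A_1$. Next I would compute the action on generators. For $t^n$, the term $\partial_\theta(t^n)=0$ kills the second summand, giving $\Delta(t^n)=\{n\}t^n$. For $\theta t^n$, I get $\partial_t(\theta t^n)=\{n\}\theta t^n$ and $\theta\,\partial_\theta(\theta t^n)=\theta\,q^n t^n = q^n\,\theta t^n$, so $\Delta(\theta t^n)=(\{n\}+q^n)\theta t^n$. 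Here I would invoke the stated $q$-number identity $\{n+1\}=\{n\}+q^n$ to rewrite this as $\{n+1\}\theta t^n$, which matches the claimed formula.

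The remaining and genuinely substantive step is the twisted Leibniz rule $\Delta(ab)=\Delta(a)b+\sigma(a)\Delta(b)$. By bilinearity it suffices to check it on products of generators, of which there are essentially three types depending on how many $\theta$ factors appear: $t^m\cdot t^n$, $t^m\cdot(\theta t^n)$ (and symmetrically), and $(\theta t^m)\cdot(\theta t^n)$. The last case is automatic since $\theta^2=0$ makes both sides zero, so the real content is in the first two. For $t^m t^n=t^{m+n}$ the check reduces to the additive identity $\{m+n\}=\{m\}+q^m\{n\}$ stated just above, using $\sigma(t^m)=q^m t^m$; for the mixed product $t^m\theta t^n=\theta t^{m+n}$ the same additive identity together with $\{n+1\}$ handles it. I expect this Leibniz verification to be the main obstacle, not because any single step is hard but because one must carefully track which $q$-number identity is being used and ensure the twist by $\sigma$ is applied to the correct factor; the split into even and odd generators is what organizes the bookkeeping. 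Once the Leibniz rule is confirmed on generators, it extends to all of $\A$ by bilinearity, completing the proof.
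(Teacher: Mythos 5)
Your proposal is correct and follows essentially the same route as the paper: state the action on generators using $\{n+1\}=\{n\}+q^{n}$, then verify the twisted Leibniz rule $\Delta(ab)=\Delta(a)b+\sigma(a)\Delta(b)$ on products of generators via the identity $\{n+m\}=\{n\}+q^{n}\{m\}$. You are in fact slightly more complete than the paper, which omits the (trivial) case of a product of two odd generators and does not spell out the evenness check.
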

\begin{proof}
We show that
\begin{eqnarray}
\Delta (t^n t^m)&=\Delta (t^n) t^m+\sigma (t^n) \Delta (t^m),\\
\Delta (t^n \theta)&=\Delta (t^n) \theta+\sigma (t^n) \Delta (\theta),\\
\ &=\Delta (\theta)t^n +\sigma (\theta) \Delta (t^n).
\end{eqnarray}
Indeed, we have $\Delta (t^n t^m)=\Delta (t^{n+m})=\{n+m\} t^{n+m}$.
On the other hand
$$\Delta (t^n) t^m+\sigma (t^n) \Delta (t^m)=
\{n\} t^{n}t^{m}+q^n t^n\{m\} t^{m}=(\{n\} +q^n
\{m\})t^{n+m}=\{n+m\} t^{n+m}.$$ Also,  we have $\Delta (\theta t^n
)=\{n+1\} \theta t^n$. On the other hand
$$\Delta (t^n) \theta+\sigma (t^n) \Delta (\theta)=
\{n\} t^{n}\theta+q^n t^n \theta=(\{n\}
+q^n )\theta t^{n}=\{n+1\} \theta t^{n},$$
and
$$\Delta (\theta)t^n +\sigma (\theta) \Delta (t^n)=
 \theta t^{n}+q  \theta \{n\} t^n=(1+q^n \{n\}
)\theta t^{n}=\{n+1\} \theta  t^{n}.$$

\end{proof}

The conditions (\ref{HLSCond1}) and (\ref{HLSCond2}) of the theorem
  (\ref{HLS}) are satisfied by the
 $\sigma$ derivation $\Delta$, with $\delta=1$. Therefore we may construct a Hom-Lie
  superalgebra on the superspace $\mathcal{V}=\A \cdot \Delta.$

  Let $\mathcal{V}=\A \cdot \Delta,$ be a superspace generated by the
  elements $X_n=t^n\cdot \Delta$
  of parity $0$
   and the elements $G_n=\theta t^n\cdot \Delta$ of parity $1$.

  Let $[-,-]_{\sigma}$ be a bracket on the
  superspace $\mathcal{V}$  defined by
  \begin{align*}
[ X_n,X_m]_\sigma &=(\{m\}-\{n\})X_{n+m} \\
 [X_n,G_m]_\sigma &= (q^n \{m+1\}-q^{m+1}\{n\}) G_{n+m}
\end{align*}
The others brackets are obtained by supersymmetry or are $0$.

Let $\alpha$ be an even linear map on $\mathcal{V}$ defined
on the generators by
\begin{align*}
\alpha ( X_n ) &=(1+ q^n)X_{n} \\
 \alpha ( G_n ) &=(1+ q^{n+1})G_{n} \\
\end{align*}

\begin{prop}
The triple  $(\mathcal{V},[-,-]_{\sigma},\alpha)$
   is a Hom-Lie
  superalgebra.
\end{prop}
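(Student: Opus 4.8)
The plan is to obtain the statement as a direct application of Theorem~\ref{HLS}. As already noted just before the statement, the $\sigma$-derivation $\Delta=\partial_t+\theta\partial_\theta$ satisfies both hypotheses \eqref{HLSCond1} and \eqref{HLSCond2} with $\delta=1$; hence the theorem guarantees that the bracket defined by \eqref{*} is well-defined, supersymmetric (by \eqref{**2}), and closed on $\mathcal{V}=\A\cdot\Delta$ via the closed form \eqref{**}. Consequently the only things left to check are that the explicit brackets written on the generators agree with \eqref{**}, and that the linear map $\alpha$ of the statement is exactly the twist produced by the theorem.

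First I would recover the brackets from \eqref{**}. Taking $a=t^n$, $b=t^m$ gives $[X_n,X_m]_\sigma=(\sigma(t^n)\Delta(t^m)-\sigma(t^m)\Delta(t^n))\cdot\Delta=(q^n\{m\}-q^m\{n\})X_{n+m}$, and the $q$-number identity $q^n\{m\}-q^m\{n\}=\tfrac{q^n-q^m}{1-q}=\{m\}-\{n\}$ rewrites this as $(\{m\}-\{n\})X_{n+m}$. Similarly, with $a=t^n$ (even) and $b=\theta t^m$ (odd), formula \eqref{**} yields $[X_n,G_m]_\sigma=(q^n\{m+1\}-q^{m+1}\{n\})G_{n+m}$, using $\sigma(\theta t^m)=q^{m+1}\theta t^m$ and $\Delta(\theta t^m)=\{m+1\}\theta t^m$ from the preceding Lemma. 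These coincide with the displayed brackets, and the remaining ones are fixed by supersymmetry.

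The key point is the identification of $\alpha$. Since $\delta=1$, the bracket is bilinear, and $\sigma$ is even, the two inner terms of \eqref{***} combine as $[\sigma(a)\cdot\Delta,-]_\sigma+[a\cdot\Delta,-]_\sigma=[(\sigma(a)+a)\cdot\Delta,-]_\sigma$. Thus, setting $\alpha(a\cdot\Delta)=(\sigma(a)+a)\cdot\Delta$, identity \eqref{***} becomes precisely the Hom-superJacobi identity \eqref{JacobyHomsuperLie} for $(\mathcal{V},[-,-]_\sigma,\alpha)$. Evaluating this $\alpha$ on the generators gives $\alpha(X_n)=(\sigma(t^n)+t^n)\cdot\Delta=(1+q^n)X_n$ and $\alpha(G_n)=(\sigma(\theta t^n)+\theta t^n)\cdot\Delta=(1+q^{n+1})G_n$, which is exactly the map defined before the statement; here $\alpha$ is well-defined on $\mathcal{V}$ because \eqref{HLSCond1} gives $\sigma(Ann(\Delta))+Ann(\Delta)\subseteq Ann(\Delta)$.

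There is essentially no serious obstacle: the substance is the bookkeeping above, the only genuinely computational point being the two $q$-number simplifications, and the only conceptual point being that the twist required by a Hom-Lie superalgebra is $\alpha=\sigma+\id$ restricted to $\mathcal{V}$, rather than $\sigma$ alone, which is exactly what the $\delta=1$ term of \eqref{***} supplies. Combining supersymmetry from \eqref{**2} with the rewritten identity \eqref{***} then completes the verification that $(\mathcal{V},[-,-]_\sigma,\alpha)$ is a Hom-Lie superalgebra.
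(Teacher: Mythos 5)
Your proof is correct and follows the same route as the paper, which likewise deduces supersymmetry from \eqref{**2} and the Hom-superJacobi identity from Theorem~\ref{HLS} with $\delta=1$. The only difference is one of detail: the paper leaves implicit the key identification that you spell out, namely that bilinearity lets the two terms of \eqref{***} combine into a single bracket against $(\sigma+\id)(a)\cdot\Delta$, so that the required twist is exactly the map $\alpha$ given on the generators by $(1+q^n)$ and $(1+q^{n+1})$.
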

The supersymmetry follow from the definition and the Hom-superJacobi identity
follows from the theorem (\ref{HLS}). One can also check directly that
$$ \circlearrowleft_{X_n,X_m,G_p}{[\alpha (X_n),[X_m,G_p]_\sigma]_\sigma}=0.$$

The algebra constructed is a realization of the $q$-deformed Witt algebra.

\paragraph{\textbf{Acknowledgements.}  This work was supported by Mulhouse and Sfax universities
and partially  by the  Swedish Links program of SIDA Foundation. }

\end{document}